\theoremstyle{plain} 
\newtheorem{thm}{Theorem}[section]
\newtheorem{lemma}[thm]{Lemma} 
\newtheorem{prop}[thm]{Proposition} 
\newtheorem{conj}[thm]{Conjecture}
\theoremstyle{definition} 
\newtheorem{rmk}[thm]{Remark} 
\newtheorem{ex}[thm]{Example}
\newtheorem*{ack}{Acknowledgements}
\newtheorem*{plan}{Plan of the paper}
\newtheorem*{notation}{Notation}
\newtheorem*{note}{Note}
\newtheorem*{rw}{Related works}
\DeclareMathOperator{\C}{\mathbb{C}}
\DeclareMathOperator{\p}{\mathbb{P}}
\DeclareMathOperator{\Q}{\mathbb{Q}}		
\DeclareMathOperator{\Z}{\mathbb{Z}}
\DeclareMathOperator{\R}{\mathbb{R}}
\DeclareMathOperator{\Hom}{\text{Hom}}
\DeclareMathOperator{\A}{\mathcal{A}}
\DeclareMathOperator{\D}{\text{D}^b}
\DeclareMathOperator{\FM}{\text{FM}}
\setlist[description]{style=multiline,topsep=10pt,leftmargin=1cm,font=\normalfont,%
    align=parright}	
\title{\textbf{Fourier-Mukai partners for very general special cubic fourfolds}}
\author{Laura Pertusi}
\date{}
\begin{document}
\pagestyle{plain}
\maketitle

\begin{abstract} 
We exhibit explicit examples of very general special cubic fourfolds with discriminant $d$ admitting an associated (twisted) K3 surface, which have non-isomorphic Fourier-Mukai partners. In particular, in the untwisted setting, we show that the number of Fourier-Mukai partners for a very general special cubic fourfold with discriminant $d$ and having an associated K3 surface, is equal to the number $m$ of Fourier-Mukai partners of its associated K3 surface, if $d \equiv 2 (\text{mod}\,6)$; else, if $d \equiv 0 (\text{mod}\,6)$, the cubic fourfold has $\lceil m/2 \rceil$ Fourier-Mukai partners.
\end{abstract}

\section{Introduction}

A cubic fourfold $Y$ is a (smooth) hypersurface of degree $3$ in $\p^5_{\C}$. In \cite{Kuz1}, Kuznetsov studied the derived category $\D(Y)$ of bounded complexes of coherent sheaves on $Y$ to address the problem of the (non)rationality of the cubic fourfold. More precisely, for $i=0,1,2$, let $\mathcal{O}_Y(i)$ be the pullback of the line bundle $\mathcal{O}_{\p^5_{\C}}(i)$ with respect to the embedding of $Y$ in $\p^5_{\C}$. The derived category $\D(Y)$ admits a semiorthogonal decomposition of the form 
$$\D(Y)=\langle \A_Y, \mathcal{O}_Y, \mathcal{O}_Y(1), \mathcal{O}_Y(2) \rangle,$$
where $\A_Y$ is the right orthogonal of the subcategory of $\D(Y)$ generated by $\lbrace \mathcal{O}_Y, \mathcal{O}_Y(1), \mathcal{O}_Y(2) \rbrace$, i.e. 
$$\A_Y:=\lbrace F \in \D(Y): \Hom_{\D(Y)}(\mathcal{O}_Y(i),F)=0 \text{ for every } i=0,1,2 \rbrace.$$
Moreover, the triangulated subcategory $\A_Y$ has certain similarities with the bounded derived category of coherent sheaves on a K3 surface. Indeed, the Serre functor on $\mathcal{A}_Y$ is the shift by two and the Hochschild cohomology of $\mathcal{A}_Y$ is isomorphic to that of a K3 surface (see \cite{Kuz3}, Corollary 4.3 and \cite{Kuz2}, Proposition 4.1). 

The only non-trivial piece $\A_Y$ of the decomposition above should carry the information about the birational type of the cubic hypersurface. In fact, it has been conjectured that the cubic fourfold $Y$ is rational if and only if the category $\A_Y$ is equivalent to the derived category of coherent sheaves on a K3 surface (see \cite{Kuz1}, Conjecture 1.1). To support this guess, Kuznetsov proved in \cite{Kuz1} that the cubic fourfolds which were known to be rational satisfy this condition (see also \cite{Ricc}). Furthermore, this conjecture descends from a more general one, concerning the Clemens-Griffiths components associated to a (maximal) semiorthogonal decomposition (see \cite{Kuz2}, Section 3). 
 
On the level of the Hodge theory, the existence of an associated K3 surface as an indicator of rationality was deeply studied (see \cite{Hass2}, for a complete survey). Actually, Kuznetsov's conjecture would imply that a cubic fourfold with a Hodge associated K3 surface is rational, by results of Addington, Thomas and Bayer, Lahoz, Macrì, Nuer, Perry and Stellari, relating the categorical and the Hodge theoretical setting (see \cite{AT}, Theorem 1.1 and \cite{BLM+}, Corollary 1.7). Nevertheless, these conjectures have not been proved yet. 

In \cite{Huy}, Huybrechts deeply studied the category $\A_Y$, in order to develop a theory for cubic fourfolds which parallels that of the derived category of a (twisted) K3 surface. In particular, he proved the analogous version for $\A_Y$ of some results concerning Fourier-Mukai partners of a K3 surface. In general, we recall that a functor $\Xi: \D(Z) \rightarrow \D(Z')$, between the derived categories of two algebraic varieties $Z$ and $Z'$, is of \emph{Fourier-Mukai type} if there exist an object $K$ in the derived category $\D(Z \times Z')$ of the product and an isomorphism of exact functors
\[
\Xi(-) \cong \Phi_K(-):= Rp_{Z' *}(K \overset{\text{L}}{\otimes} Lp_{Z}^*(-)),
\] 
where $p_Z: Z \times Z' \rightarrow Z$ and $p_{Z'}: Z \times Z' \rightarrow Z'$ are the natural projections (see \cite{Kuz2}, Section 1.5). A cubic fourfold $Y'$ is a \textbf{Fourier-Mukai partner} of $Y$ if there exists an equivalence of categories 
$$\A_Y \xrightarrow{\sim} \A_{Y'}$$
which is of Fourier-Mukai type, i.e.\ such that the composition
$$\D(Y) \xrightarrow{i^*} \A_Y \xrightarrow{\sim} \A_{Y'} \hookrightarrow \D(Y')$$
is a Fourier-Mukai functor; here, $i^*$ denotes the left adjoint functor of the full inclusion $i: \A_Y \hookrightarrow\D(Y)$. Usually in the literature this denomination is used to identify smooth projective varieties with equivalent derived categories. However, cubic fourfolds satisfying this condition are isomorphic by a result of Bondal and Orlov (see \cite{BO}, Theorem 3.1). Thus, it becomes interesting to address the same problem by considering the K3 categories.   

Huybrechts showed that the number of (isomorphism classes of) Fourier-Mukai partners for a cubic fourfold $Y$ is finite (see \cite{Huy}, Theorem 1.1), as in the case of Fourier-Mukai partners for a K3 surface (see \cite{BM}, Proposition 5.3). Moreover, he proved that a very general cubic fourfold $Y$, i.e.\ such that $\text{rk}(H^{2,2}(Y,\Z))=1$, has no non-trivial Fourier-Mukai partners (see \cite{Huy}, Corollary 3.6).

It is natural to ask whether a \emph{special} cubic fourfold $Y$, i.e.\ such that $\text{rk}(H^{2,2}(Y,\Z)) \geq 2$, admits Fourier-Mukai partners which are not isomorphic to $Y$. In particular, we may wonder if for special cubic fourfolds it is possible to prove a version of Theorem 1.7 and Corollary 1.8 of \cite{Og},  which state that there are examples of K3 surfaces having a prescribed number of non-isomorphic Fourier-Mukai partners.

In the third section of this paper, we prove that the answer is positive in the case that the rank of $H^{2,2}(Y,\Z)$ is exactly two and the cubic fourfold $Y$ admits an associated K3 surface $X$ with ``enough'' non-trivial Fourier-Mukai partners. More precisely, given a positive integer $d$, we denote by $\mathcal{C}_d$ the divisor parametrizing special cubic fourfolds with discriminant $d$ (see Section 2.1). We recall that:
\begin{itemize}
\item (see \cite{Hass}, Theorem 1.0.1) the divisor $\mathcal{C}_d$ is non empty if and only if 
\begin{equation}\tag{0}
\label{0}
d>6 \text{ and } d\equiv 0,2 (\text{mod}\,6);
\end{equation}
\item (see \cite{Hass}, Theorem 1.0.2 or Section 2.1) a cubic fourfold $Y \in \mathcal{C}_d$ has an \emph{associated K3 surface} if and only if 
\begin{equation}\tag{\textbf{a}}
4 \nmid d,\: 9 \nmid d, \: p \nmid d \: \text{ for any odd  prime } p \text{ such that } p \equiv 2 (\text{mod}\,3).  
\end{equation}
\end{itemize}

The first result of this paper is a counting formula for the number of Fourier-Mukai partners for very general special cubic fourfolds admitting an associated K3 surface.

\begin{thm}
\label{thmmio}
Let $d$ be a positive integer satisfying $\emph{(0)}$ and $\emph{\textbf{(a)}}$. Let $Y$ be a very general special cubic fourfold in $\mathcal{C}_d$ and let $m$ be the number of non-isomorphic Fourier-Mukai partners of an associated K3 surface to $Y$. Then, the cubic fourfold $Y$ has exactly $m$ non-isomorphic Fourier-Mukai partners, when $d \equiv 2 (\emph{mod}\,6)$; otherwise, if $d \equiv 0 (\emph{mod}\,6)$, the number of non-isomorphic Fourier-Mukai partners of $Y$ is equal to $\lceil m/2 \rceil$.
\end{thm}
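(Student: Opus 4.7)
The plan is to translate the problem of counting Fourier--Mukai partners of $Y$ into a lattice-theoretic enumeration on the Mukai lattice $\widetilde{H}(\A_Y, \Z)$, and then to reduce this to the known counting of Fourier--Mukai partners of an associated K3 surface, paying careful attention to the symmetries coming from the canonical $A_2$-sublattice spanned by the classes $\lambda_1, \lambda_2 \in \widetilde{H}(\A_Y, \Z)$.

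First I would describe explicitly the algebraic sublattice $\widetilde{H}^{1,1}(\A_Y, \Z)$ for a general $Y \in \mathcal{C}_d$. Under the genericity hypothesis $\text{rk}(H^{2,2}(Y, \Z)) = 2$ this sublattice has rank $3$ and is generated by $\lambda_1, \lambda_2$ together with one additional algebraic class whose intersection data depends on $d \pmod 6$; by the Hassett condition together with the Addington--Thomas theorem, its orthogonal complement (the transcendental part $T(\A_Y)$) is Hodge-isometric to the transcendental lattice of any associated K3 surface $X$. I would then invoke Huybrechts' Derived Torelli theorem for cubic fourfolds to identify Fourier--Mukai partners of $Y$ with orientation-preserving Hodge-isometry classes of $\widetilde{H}(\A_Y, \Z)$, and Mukai--Orlov's Derived Torelli theorem to identify Fourier--Mukai partners of $X$ with Hodge-isometry classes of $\widetilde{H}(X, \Z)$.

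Using the Addington--Thomas identification of the two Mukai lattices together with the requirement that equivalences respect the $A_2$-sublattice, I expect to produce a surjection from the set of Fourier--Mukai partners $\{X_1, \ldots, X_m\}$ of $X$ onto the set of Fourier--Mukai partners of $Y$, whose fibres are the orbits of the natural involution on $A_2$ (swapping $\lambda_1 \leftrightarrow \lambda_2$) extended to an isometry of $\widetilde{H}(\A_Y, \Z)$. When $d \equiv 2 \pmod 6$ the extra algebraic class is fixed by this involution, so the action on $\{X_1, \ldots, X_m\}$ is trivial and one recovers exactly $m$ non-isomorphic cubic fourfold partners. When $d \equiv 0 \pmod 6$ the involution acts non-trivially, pairing up the $X_i$ and producing $\lceil m/2 \rceil$ partners; the ceiling accommodates those $X_i$ that happen to be fixed by the involution.

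I expect the main obstacle to be this last step: the precise analysis of the action of the $A_2$-involution on the set $\{X_1, \ldots, X_m\}$. This will require a careful computation of the discriminant form of the algebraic Mukai lattice, together with a lattice-theoretic criterion detecting when a K3 partner $X_i$ admits a distinct $A_2$-dual $X_j$. The dichotomy modulo $6$ should emerge from the congruence conditions governing which overlattice extensions exist (essentially whether $3 \mid d$ or not), and the handling of self-dual fixed points under the involution is what produces the ceiling function $\lceil m/2 \rceil$ in the $d \equiv 0 \pmod 6$ case.
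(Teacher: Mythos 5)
Your overall strategy---reduce the count to lattice theory on the Mukai lattice, identify FM partners of $Y$ and of $X$ via the respective derived Torelli theorems, and explain the mod-$6$ dichotomy through an involution that collapses the $m$ K3 partners two-to-one---is the same in spirit as the paper's. But the involution you name is the wrong one, and this is not a cosmetic issue. The swap $\lambda_1 \leftrightarrow \lambda_2$ of $A_2$ plays no role in the argument: the dichotomy comes from Hassett's comparison of the \emph{marked} and \emph{labelled} period domains (Proposition \ref{markedvslab}), i.e.\ from whether the subgroup $G_d^+$ of isometries of $L$ fixing $h$ and acting trivially on the labelling $K_d$ equals the full group $\Gamma_d^+$ preserving $K_d$. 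The nontrivial coset, which exists exactly when $d \equiv 0\,(\text{mod}\,6)$, is represented by the isometry $\gamma$ of $L^0 \cong A_2^{\perp}$ acting as $-1$ on the two hyperbolic planes and as the identity elsewhere (Remark \ref{rmkgamma}); it sends $v_Y$ to $-v_Y$ and is an isometry of $A_2^{\perp}$, not of $A_2$. Whether $\gamma$ exists is governed by the discriminant form of $K_d^{\perp}$ (Proposition \ref{K_dperp}: cyclic $\Z/d\mathbb{Z}$ when $d\equiv 2$, versus $\Z/\frac{d}{3}\mathbb{Z}\oplus\Z/3\mathbb{Z}$ when $d\equiv 0$), not by whether ``the extra algebraic class is fixed'' by an $A_2$-swap. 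Note also that the swap of $\lambda_1,\lambda_2$ acts as $-1$ on $d(A_2)\cong\Z/3\mathbb{Z}$, so it does not even extend to $\tilde{\Lambda}$ by the identity on $A_2^{\perp}$.

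The second, more serious gap is your treatment of fixed points: you say the ceiling ``accommodates those $X_i$ that happen to be fixed by the involution.'' This is backwards. If the involution had $f>0$ fixed points on the set $\{X_1,\dots,X_m\}$ with $m>1$, the number of orbits would be $(m+f)/2 > m/2 = \lceil m/2\rceil$ (recall $m=2^{h-1}$ is even for these $d$), and the stated formula would be false. The theorem is correct precisely because the action is \emph{free}, and proving this is the technical heart of the paper (Lemma \ref{lemmadequiv0}): one shows that the Hodge isometry $\gamma'$ induced by $\gamma$ between the transcendental lattices $T_{X_{k_1}}$ and $T_{X_{k_2}}$ cannot extend to $H^2(X_{k_1},\Z)\cong H^2(X_{k_2},\Z)$, because $\text{NS}(X_{k_i})=\Z l_{k_i}$ admits only $\pm\text{id}$ and $\gamma$ acts as $(h,v)\mapsto(h,-v)$ on the labelling, which is incompatible with $\pm 1$ on the glue group. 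One also needs the companion fact (Lemma \ref{lemmadequiv0bis}) that the $\gamma$-translate of a partner's transcendental lattice is again a partner's, so that the $m$ points pair up entirely among themselves. Finally, your proposal passes over the existence step: a Hodge structure on $\tilde{\Lambda}$ with the right algebraic part must be shown to come from an actual cubic fourfold (via Looijenga's description of the image of the period map, avoiding $\mathcal{D}_2$ and $\mathcal{D}_6$), and the isometry of transcendental lattices must be extended to the full Mukai lattices via Nikulin's uniqueness of primitive embeddings before Theorem \ref{propcontroversa} can be applied.
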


As a consequence of Theorem \ref{thmmio}, we deduce that there exist cubic fourfolds admitting an arbitrary number of Fourier-Mukai partners, depending on the number of distinct odd primes in the prime factorization of the discriminant (see Proposition \ref{propmia}).

More generally, we recall that a cubic fourfold $Y \in \mathcal{C}_d$ has an \emph{associated twisted K3 surface} (see \cite{Huy}, Section 2.4 or Section 2.4) if and only if
\begin{equation}\tag{\textbf{a'}}
n_i \equiv 0 (\text{mod}\,2) \text{ for all primes } p_i \equiv 2 (\text{mod}\,3) \text{ in } 2d=\prod p_i^{n_i}.
\end{equation} 
A weaker formulation of Theorem \ref{thmmio} holds for very general special cubic fourfolds $Y$ in $\mathcal{C}_d$, admitting an associated twisted K3 surface $(X,\alpha)$, if $9$ does not divide the discriminant $d$. Indeed, in Section 4.1, we show that the number of non-isomorphic twisted Fourier-Mukai partners of $(X,\alpha)$ with the Brauer class of the same order as $\alpha$, gives a lower bound for the number of Fourier-Mukai partners of the cubic fourfold. 
 
\begin{thm}
\label{propmiatwist}
Let $d$ be a positive integer satisfying $\emph{(0)}$ and $\emph{\textbf{(a')}}$. Assume that $9$ does not divide $d$. Let $Y$ be a very general special cubic fourfold in $\mathcal{C}_d$ with associated twisted K3 surface $(X,\alpha)$, where $\alpha$ has order $\kappa$; let $m'$ be the number of non-isomorphic Fourier-Mukai partners of $(X,\alpha)$ with Brauer class of order $\kappa$. Then the cubic fourfold $Y$ admits at least $m'$ non-isomorphic Fourier-Mukai partners, when $d \equiv 2 (\emph{mod}\,6)$; otherwise, if $d \equiv 0 (\emph{mod}\,6)$, the number of non-isomorphic Fourier-Mukai partners of $Y$ is at least $\lceil m'/2 \rceil$.  
\end{thm}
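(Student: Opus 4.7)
The plan is to adapt Proposition \ref{propmia} (the engine of Theorem \ref{thmmio}) to the twisted setting, producing an injection—and hence a lower bound—from the set of twisted Fourier-Mukai partners $(X', \alpha')$ of $(X, \alpha)$ with $\mathrm{ord}(\alpha') = \kappa$ into the set of Fourier-Mukai partners of $Y$. The main ingredients are the twisted derived Torelli theorem for K3 surfaces and Huybrechts's Hodge-theoretic description of Fourier-Mukai equivalences between the K3 categories $\A_Y$: for a general $Y \in \mathcal{C}_d$, such equivalences are controlled by orientation-preserving Hodge isometries of the Mukai lattice $\widetilde{H}(\A_Y, \Z)$.

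First I would encode the hypothesis that $(X,\alpha)$ is associated to $Y$ as a Hodge isometry $\widetilde{H}(\A_Y, \Z) \cong \widetilde{H}(X, \alpha, \Z)$. Given any twisted Fourier-Mukai partner $(X', \alpha')$ of $(X, \alpha)$ with $\mathrm{ord}(\alpha') = \kappa$, the twisted derived Torelli theorem yields a Hodge isometry $\widetilde{H}(X, \alpha, \Z) \cong \widetilde{H}(X', \alpha', \Z)$. Composing the two and invoking the surjectivity of the period map on $\mathcal{C}_d$ together with the general-point assumption on $Y$, the result can be realized as $\widetilde{H}(\A_{Y'}, \Z)$ for some $Y' \in \mathcal{C}_d$; one must then verify that the induced equivalence $\A_Y \xrightarrow{\sim} \A_{Y'}$ is of Fourier-Mukai type, which follows by the same argument used in the untwisted case.

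Next I would check that non-isomorphic twisted Fourier-Mukai partners of $(X,\alpha)$ give non-isomorphic cubic fourfolds, modulo a $2$-to-$1$ collapse when $d \equiv 0 \pmod 6$. As in the untwisted setting, when $3 \mid d$ an additional Hodge isometry of $\widetilde{H}(\A_Y, \Z)$—not induced from any derived equivalence of $(X,\alpha)$—pairs Fourier-Mukai partners of $Y$ and produces the $\lceil m'/2 \rceil$ bound; for $d \equiv 2 \pmod 6$ no such extra isometry occurs and the injection is strict, yielding $m'$. The assumption $9 \nmid d$ enters precisely to guarantee that the order of the Brauer class is preserved under the above composition of isometries, so that the resulting $Y'$ really lies in the locus of $\mathcal{C}_d$ parametrising cubics with associated twisted K3 of Brauer order exactly $\kappa$, instead of jumping to a different twist order or becoming untwisted.

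The main obstacle, I expect, is this arithmetic control. In contrast to the untwisted case, where Hodge isometries automatically respect a trivial $B$-field, the twisted setting forces a careful tracking of $B$-field lifts and of $\mathrm{ord}(\alpha)$ through every isometry involved; pinpointing that $9 \nmid d$ is the sharp arithmetic condition for this preservation—using the structure of the discriminant form of $\widetilde{H}(\A_Y, \Z)$ at the prime $3$—will be the technical heart of the argument. It is also this lack of a reverse construction in the twisted setting (a cubic Fourier-Mukai partner need not originate from a twisted K3 partner of order $\kappa$) that restricts the conclusion of Theorem \ref{propmiatwist} to a lower bound rather than an equality.
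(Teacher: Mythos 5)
Your overall skeleton matches the paper's: send each twisted partner $(X_i,\alpha_i)$ of order $\kappa$ to a point of the period domain via the Hodge isometry of (twisted) Mukai lattices, realize it as a cubic fourfold by Looijenga's surjectivity, conclude the Fourier--Mukai equivalence from Theorem \ref{propcontroversa}, and account for the $2$-to-$1$ collapse $\mathcal{D}_d^{\text{mar}} \rightarrow \mathcal{D}_d^{\text{lab}}$ when $d \equiv 0\ (\text{mod}\,6)$ via Proposition \ref{markedvslab}. However, there is a genuine gap: you assert, but do not prove, that non-isomorphic twisted partners give non-isomorphic \emph{marked} cubic fourfolds, and this is precisely the technical heart of the paper's proof (Lemma \ref{lemmafondam}). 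The difficulty, absent in the untwisted case, is that the cubic's transcendental lattice corresponds to $T(X,\alpha)=\ker\alpha$, a finite-index sublattice of $T_X$; two non-isomorphic twisted K3 surfaces could a priori have Hodge isometric kernels inducing the same point of $\mathcal{D}_d^{\text{mar}}$. The paper rules this out by showing that an isometry $\eta\colon T(X,\alpha)\cong T(X',\alpha')$ acting trivially on discriminant groups extends first to $T_X\cong T_{X'}$ --- using that a \emph{cyclic} group has a unique subgroup of each order, so the index-$\kappa$ overlattices correspond to the same isotropic subgroup --- and then, after a careful check that the induced action on $d(T_X)$ is still trivial, to an isometry of $\Lambda$ compatible with the Brauer classes, whence $(X,\alpha)\cong(X',\alpha')$ by Torelli. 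Remark \ref{rmkonperioddom} in the paper shows this distinctness genuinely fails if the orders of the Brauer classes are not matched, so the step cannot be waved through.

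Relatedly, you misattribute the role of the hypothesis $9\nmid d$. It is not there to keep the order of the Brauer class from ``jumping'' under the composition of isometries (the order is fixed by the definition of $m'$ and by restricting to $\FM^{\kappa}(X,\alpha)$); it is there because, by Proposition \ref{K_dperp}, $d(K_d^{\perp})\cong \Z/\frac{d}{3}\Z\oplus\Z/3\Z$ fails to be cyclic exactly when $9\mid d$, and cyclicity of $d(T(\mathcal{A}_Y))\cong d(T(X,\alpha))$ is what the extension argument above hinges on. Without it the uniqueness of the order-$\kappa$ subgroup, and hence the injectivity of the map from twisted partners to marked cubics, breaks down.
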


In particular, under the hypotheses of Theorem \ref{propmiatwist}, the value $m'$ is controlled by the number of distinct primes in the prime factorization of $d/2$ divided by the square of the order of the Brauer class $\alpha$ and by the Euler function evaluated in $\text{ord}(\alpha)$, as we show in Proposition \ref{countm'}.

These results complete the expected analogy between cubic fourfolds and K3 surfaces, stated in \cite{Huy}, and prove that the isomorphism class of a cubic fourfold is in general not determined by its K3 subcategory. They also represent a first step in order to understand whether cubic fourfolds which are Fourier-Mukai partners are birational.

\begin{rw}
In \cite{BMMS}, Proposition 6.3, the authors prove that a cubic fourfold $Y$ in $\mathcal{C}_8$ with $H^{2,2}(Y,\Z)$ of rank $2$ has only one isomorphism class of Fourier-Mukai partners. Also in \cite{BMMS}, Remark 6.4, they explain how to construct examples of non-isomorphic Fourier-Mukai partners for cubic fourfolds in $\mathcal{C}_8$ with $H^{2,2}(Y,\Z)$ of rank $> 2$.
\end{rw} 

\begin{plan}
In Section 2 we recall some preliminary material on cubic fourfolds and  K3 surfaces we will use in the next sections. In Section 3.1 we count the period points defined by the Fourier-Mukai partners of a K3 surface associated to a very general special cubic fourfold. Section 3.2 is devoted to the proof of Theorem \ref{thmmio}. In Section 4.1 we prove Theorem \ref{propmiatwist}. In Section 4.2 we give a counting formula for twisted non-isomorphic Fourier-Mukai partners of a twisted K3 surface which is associated to a cubic fourfold as in Theorem \ref{propmiatwist}. In Section 4.3 we explicit the lower bound of Theorem \ref{propmiatwist}.
\end{plan}

\begin{notation}
We use the following terminology: a cubic fourfold $Y$ is very general if $\text{rk}(H^{2,2}(Y,\Z))=1$, while a very general special cubic fourfold $Y$  has $\text{rk}(H^{2,2}(Y,\Z))=2$ and belongs to the complement of a countable union of divisors in a divisor $\mathcal{C}_d$.

The number of non-isomorphic Fourier-Mukai partners of a K3 surface is denoted by $m$, which could be also equal to $1$.
\end{notation}

\begin{note}
Using the new results in \cite{BLM+}, it is possible to prove that Theorem \ref{thmmio} and Theorem \ref{propmiatwist} hold for every cubic fourfold with associated (twisted) K3 surface and $\text{rk}(H^{2,2}(Y,\Z))=2$, not only for the very general special ones. See Remark \ref{rmk_BLMS+Huy}, Remark \ref{rmk_extwithBLM+} and Remark \ref{rmk_extwithBLM+twist} for more details.
\end{note}

\begin{ack}
I am grateful to Daniel Huybrechts who read a preliminary version of this paper, for his valuable comments and for answering a question concerning Theorem 1.5 of \cite{Huy}. It is a pleasure to thank my advisor Paolo Stellari who proposed me the topic, for his useful suggestions and helpful discussions. Finally, I want to thank Riccardo Moschetti for his interesting observations and the referee for careful reading of the paper and for expert suggestions. 
\end{ack} 
 
\section{Recollection of results and notation} 

In this section we recall some results about cubic fourfolds and K3 surfaces we will use in the next. 

\subsection{Special cubic fourfolds and associated K3 surface}
Let $Y$ be a cubic fourfold; we denote by $H^4(Y,\Z)(1)$ the degree four integral cohomology group of $Y$ with weight two Hodge structure and intersection form $(\,,\,)$ with reversed sign. By classical results of Hodge theory and classification of quadratic forms, we have that $H^4(Y,\Z)(1)$ is isometric to the odd unimodular lattice $L:=\Z^{\oplus 2} \oplus \Z(-1)^{\oplus 21}$. It contains an element $h$ such that $(h,h)=-3$, corresponding to the square of the class of a hyperplane in $Y$. Moreover, the primitive lattice $H^4(Y,\Z)_{\text{prim}}(1)$ (with weight two Hodge structure and intersection form with reversed sign) is isometric to 
$$L^0:= A_2(-1) \oplus U^{\oplus 2} \oplus E_8(-1)^{\oplus 2};$$
here $U$ denotes the free group $\Z^{\oplus 2}$ with intersection form $\begin{pmatrix}
0 & 1 \\ 
1 & 0
\end{pmatrix}$ 
, $E_8(-1)$ is the unique even, unimodular lattice of signature $(0,8)$ and 
$A_2(-1):=\begin{pmatrix}
-2 &  1 \\ 
1  & -2
\end{pmatrix}$
(see \cite{Hass}, Proposition 2.1.2). We set
\begin{equation}
\label{quadric}
Q:=\lbrace y \in \p(L^0 \otimes \C): (y,y)=0, (y,\bar{y})>0 \rbrace.
\end{equation}
The choice of a connected component $\mathcal{D}^{\prime}$ of $Q$ determines the local period domain for cubic fourfolds. Let $\Gamma^{+}$ be the subgroup of the group of automorphism of $L$, preserving the class $h$ and the component $\mathcal{D}^{\prime}$. The \emph{global period domain} of cubic fourfolds is the quotient $\mathcal{D}:= \Gamma^+ \setminus \mathcal{D}^{\prime}$. We denote by $\mathcal{C}$ the moduli space of cubic fourfolds and let
$$\tau: \mathcal{C} \rightarrow \mathcal{D}$$
be the \emph{period map}. Voisin proved that $\tau$ is an open immersion, i.e.\ Torelli Theorem holds for cubic fourfolds (see \cite{Voi}). 

A cubic fourfold $Y$ is \emph{special} if there exists a rank-two (negative definite) primitive sublattice $(K, (\,,\,))$ of $H^4(Y,\Z) \cap H^{2,2}(Y)$, containing the class $h$. This lattice $K$ is a \emph{labelling} for $Y$ and the discriminant of the pair $(Y,K)$ is the determinant of the intersection matrix of $K$. We will write $K_d$ to underline the fact that the labelling has discriminant $d$. By Hassett's work, special Hodge structures with labelling $K_d$ form a divisor $\mathcal{D}_d'$ in the local period domain. If $\mathcal{D}_d=\Gamma^+ \setminus \mathcal{D}_d^{\prime}$, then $\mathcal{C}_d=\mathcal{C}\cap \mathcal{D}_d$ (via $\tau$) is the irreducible divisor in $\mathcal{C}$ of \emph{special cubic fourfolds of discriminant} $d$. By \cite{Hass}, Theorem 1.0.1, the divisor $\mathcal{C}_d$ is non empty if and only if 
\begin{equation}\tag{0}
d>6 \text{ and } d\equiv 0,2 (\text{mod}\,6).
\end{equation}
Hassett also gave numerical conditions on $d$ which ensure the existence of an \emph{associated K3 surface}.

\begin{thm}[\cite{Hass}, Theorem 1.0.2]
\label{propHass}
Let $Y$ be a cubic fourfold in $\mathcal{C}_d$ with labelling $K_d$. There exist a K3 surface $X$ with polarization class of degree $d$ and an isometry of Hodge structures 
$$K_d^{\perp} \cong H^2(X,\Z)_{\text{prim}}$$
between the orthogonal sublattice to $K_d$ in $H^4(Y,\Z)(1)$ and the degree two primitive cohomology of the K3 surface, if and only if $d$ satisfies the following condition:
\begin{equation}\tag{\textbf{a}}
4 \nmid d,\: 9 \nmid d, \: p \nmid d \: \text{ for any odd  prime } p \text{ such that } p \equiv 2 (\emph{mod}\,3).  
\end{equation} 
\end{thm}

We point out the following property concerning the discriminant group $d(K_d^{\perp})$ of $K_d^{\perp}$, endowed with the discriminant form $q_{K_d^{\perp}}$ induced by the intersection form.

\begin{prop}[\cite{Hass}, Proposition 3.2.6]
\label{K_dperp}
If $d \equiv 0 (\emph{mod}\,6)$, then $d(K_d^{\perp}) \cong \Z / \frac{d}{3} \mathbb{Z} \oplus \Z / 3 \mathbb{Z}$, which is cyclic unless nine divides $d$. Furthermore, we may choose this isomorphism so that 
$$q_{K_d^{\perp}}((0,1))\equiv -\frac{2}{3}(\emph{mod}\,2\mathbb{Z}) \quad \text{and} \quad q_{K_d^{\perp}}((1,0))\equiv \frac{3}{d}(\emph{mod}\,2\mathbb{Z}).$$ 
If $d \equiv 2 (\emph{mod}\,6)$, then $d(K_d^{\perp}) \cong \Z / d \mathbb{Z}$. Furthermore, we may choose a generator $g$ so that 
$$q_{K_d^{\perp}}(g)\equiv \frac{1-2d}{3d}(\emph{mod}\,2\mathbb{Z}).$$
\end{prop}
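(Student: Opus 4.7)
The plan is to compute $K_d^{\perp}\subset L$ explicitly from the description of the primitive class $v_Y$ recalled just before the statement. The key reduction is that $h^{\perp}\cap L = L^0$, so any element orthogonal to $K_d$ must lie in $L^0$ and be orthogonal to $v_Y$; hence
\[
K_d^{\perp} \;=\; L^0\cap v_Y^{\perp}.
\]
Thus the problem reduces to computing the orthogonal of the explicit vector $v_Y$ inside $L^0 = A_2(-1)\oplus U^{\oplus 2}\oplus E_8(-1)^{\oplus 2}$, and reading off the discriminant group and form from the resulting Gram matrix.

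For $d\equiv 0\pmod 6$, the class $v_Y = e_1-(d/6)f_1$ lies in a single hyperbolic summand $U$, so $v_Y^{\perp}$ splits as an orthogonal direct sum
\[
K_d^{\perp} \;=\; \Z w \oplus U \oplus E_8(-1)^{\oplus 2} \oplus A_2(-1),
\]
where $w = e_1+(d/6)f_1$ has $w^2 = d/3$. The unimodular factors $U\oplus E_8(-1)^{\oplus 2}$ contribute nothing to the discriminant group, so $d(K_d^{\perp}) \cong \Z/(d/3)\oplus\Z/3$, which is cyclic iff $\gcd(d/3,3)=1$, i.e.\ iff $9\nmid d$. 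The values of $q_{K_d^{\perp}}$ follow by evaluating $q$ on the dual vectors: on the $\Z w$-factor one computes $(w/(d/3))^2 = 3/d$, and on $A_2(-1)$ a standard calculation of the dual basis gives $\pm 2/3$. Choosing generators appropriately yields the formulas in the statement.

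For $d\equiv 2\pmod 6$, the vector $v_Y = 3(e_1-((d-2)/6)f_1)+\mu_1-\mu_2$ has nontrivial components in both a $U$-summand and in $A_2(-1)$, so $v_Y^{\perp}$ does not split as an orthogonal sum along $A_2(-1)\oplus U$. Writing $m := (d-2)/6$, a direct check shows that
\[
w_1 := \mu_1+f_1,\quad w_2 := \mu_2-f_1, \quad w_3 := e_1+mf_1
\]
is a $\Z$-basis of $v_Y^{\perp}\cap (A_2(-1)\oplus U)$, whose Gram matrix
\[
G \;=\; \begin{pmatrix} -2 & 1 & 1 \\ 1 & -2 & -1 \\ 1 & -1 & 2m \end{pmatrix}
\]
has determinant $d$. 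Elementary row and column operations reduce $G$ to the Smith normal form $\mathrm{diag}(1,1,d)$, showing that $d(K_d^{\perp})\cong \Z/d$. An explicit generator is exhibited by the first dual basis vector $w_1^*$, for which $q(w_1^*) = (G^{-1})_{11} = -(4m+1)/d \equiv (1-2d)/(3d)\pmod{2\Z}$ matches the asserted value (up to the sign/generator choice allowed by the statement).

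\textbf{Main obstacle.} The case $d\equiv 0\pmod 6$ is essentially an orthogonal-sum computation and is routine once $v_Y$ is placed inside a single $U$-summand. The bulk of the work is the case $d\equiv 2\pmod 6$: since $v_Y$ fails to sit inside one summand, one must analyze an abstract rank-$3$ sublattice of $A_2(-1)\oplus U$ by its $3\times 3$ Gram matrix. The technical heart is the Smith-normal-form reduction that exhibits the cyclic structure $\Z/d$, together with the identification of an explicit generator of the dual lattice whose quadratic form value matches the claimed formula. This last step is delicate: a discriminant form on $\Z/d$ is determined by its value on a generator only up to squaring by a unit modulo $d$, so the matching requires choosing the right generator (here $w_1^*$).
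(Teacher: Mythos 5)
Your overall strategy is sound, and it is genuinely different from the source of this result: the paper gives no proof at all (the proposition is quoted verbatim from Hassett, Proposition 3.2.6), and Hassett's own argument does not compute $K_d^{\perp}$ directly. He instead uses the standard fact (Nikulin, Proposition 1.6.1) that for a primitive sublattice $K_d$ of the unimodular lattice $L$ one has $d(K_d^{\perp})\cong d(K_d)$ with $q_{K_d^{\perp}}\cong -q_{K_d}$, and then reads everything off the rank-two Gram matrix of $K_d$ computed earlier in his paper. Your direct computation of $v_Y^{\perp}\cap L^0$ is heavier but does check out: your three vectors are indeed a $\Z$-basis of $v_Y^{\perp}\cap(A_2(-1)\oplus U)$ (they parametrize the kernel of $x\mapsto (x,v_Y)$), the Gram matrix and $\det G=d$ are correct, the gcd of the $2\times 2$ minors of $G$ is $1$ (e.g.\ the minor on rows $\{1,2\}$ and columns $\{1,3\}$ equals $1$), so the Smith normal form is $\mathrm{diag}(1,1,d)$, and $w_1^*$ really is a generator of $\Z/d$ because the cofactor column $(-(4m+1),-(2m+1),1)$ contains a unit.

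The one genuine flaw is the last step, where you claim the computed form values match the statement ``up to the sign/generator choice allowed.'' They do not: in the conventions the paper actually sets up ($L$ of signature $(2,21)$, $(h,h)=-3$, $L^0=A_2(-1)\oplus U^{\oplus 2}\oplus E_8(-1)^{\oplus 2}$) your computation yields $q=3/d$ and $-2/3$ in the first case and $q(w_1^*)=(1-2d)/(3d)$ in the second, i.e.\ exactly the \emph{negatives} of the stated values. Replacing a generator $g$ by $kg$ multiplies $q(g)$ by $k^2$, and $-1$ is not a square modulo $d$ in general (take any $d$ with a prime factor $\equiv 3\ (\mathrm{mod}\ 4)$), so no choice of generator repairs a global sign. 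The discrepancy is not an error in your lattice computation but a clash of conventions: the proposition's numerical values are copied from Hassett, who works with the intersection form of signature $(21,2)$, $(h,h)=3$ and $L^0=A_2\oplus U^{\oplus 2}\oplus E_8^{\oplus 2}$; redoing your computation there negates every Gram matrix and produces precisely $-3/d$, $2/3$ and $(2d-1)/(3d)$. You should either carry out the argument in that convention or state explicitly that your answer is $-q$ of the quoted one; as written, the final matching assertion is false. (Note also that the statement itself contains a typo: $q_{K_d^{\perp}}((1,0))$ is assigned two different values, the second of which should be $q_{K_d^{\perp}}((0,1))$.)
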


\subsection{Immersion into the moduli spaces of K3 surfaces} 
In \cite{Hass}, Section 5.3, Hassett proved that the existence of an isometry of Hodge structures as in Theorem \ref{propHass} allows an identification between the moduli space of marked special cubic fourfolds of discriminant $d$ and the moduli space of degree $d$ polarized K3 surfaces. Let us explain this observation. We fix a rank-two, negative definite, primitive sublattice $K_d \subset L$ of discriminant $d$, containing $h$. We write $\Gamma^+_d$ to denote the subgroup of the group of automorphisms of $L$ fixing the class $h$ and preserving the labelling $K_d$. Let $\mathcal{D}_d^{\text{lab}}$ be the period domain which parametrizes Hodge structures $x \in \mathcal{D}^{\prime}$ with $K_d \subset H^{2,2}(x) \cap L$, modulo the action of $\Gamma_d^+$, i.e.\
$$\mathcal{D}_d^{\text{lab}}:= \Gamma_d^+ \setminus \mathcal{D}_d^{\prime}.$$
We say that $\mathcal{D}_d^{\text{lab}}$ is the period domain of \emph{labelled special Hodge structures with discriminant} $d$. Notice that $\mathcal{D}_d^{\text{lab}}$ is birational to $\mathcal{D}_d$ via the morphism $\mathcal{D}_d^{\text{lab}} \rightarrow \mathcal{D}$. Actually, a point in $\mathcal{D}_d$ with the minimal number of algebraic classes has a unique labelling. In particular, $\mathcal{D}_d^{\text{lab}}$ is the normalization of $\mathcal{D}_d$ (see \cite{Hass}, Section 3.1)

Let, now, $G_d^+$ be the subgroup of $\Gamma_d^+$ of automorphisms acting trivially on $K_d$. Then, Hassett defines the period domain of \emph{marked special Hodge structures of discriminant $d$} as the quotient
$$\mathcal{D}_d^{\text{mar}}:= G_d^+ \setminus \mathcal{D}_d^{\prime}.$$
In this new space, two cubic fourfolds having the same labelling $K_d$ which comes from different primitive embeddings in $ H^{2,2}(-) \cap L$ are not identified. The relation between $\mathcal{D}_d^{\text{mar}}$ and $\mathcal{D}_d^{\text{lab}}$ is explained in the following proposition.

\begin{prop}[\cite{Hass}, Proposition 5.3.1]
\label{markedvslab}
The group $G_d^+$ is equal to $\Gamma_d^+$ (resp.\ the group $G_d^+$ is an index-two subgroup of $\Gamma_d^+$), if $d \equiv 2 (\emph{mod}\,6)$ (resp.\ if $d \equiv 0 (\emph{mod}\,6)$). 

The forgetful map $\rho: \mathcal{D}_d^{\text{mar}} \rightarrow \mathcal{D}_d^{\text{lab}}$ is an isomorphism (resp.\ a double cover), if $d \equiv 2 (\emph{mod}\,6)$ (resp.\ if $d \equiv 0 (\emph{mod}\,6)$). 
\end{prop}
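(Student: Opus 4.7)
The plan is to analyze the restriction homomorphism
\[
\rho: \Gamma_d^+ \longrightarrow O(K_d, h) := \{\phi \in O(K_d) : \phi(h) = h\},
\]
whose kernel is by definition $G_d^+$. Since $K_d$ has rank two, every $\phi \in O(K_d, h)$ acts on the rank-one sublattice $h^\perp \cap K_d$ (generated by some primitive class $v_Y$) as $\pm 1$, so $O(K_d, h) \subseteq \{\mathrm{id}, \sigma\}$, where $\sigma$ denotes the putative involution $h \mapsto h$, $v_Y \mapsto -v_Y$ of $K_d \otimes \Q$. The proposition therefore reduces to two steps: (i) decide for which $d$ the map $\sigma$ actually preserves $K_d$; (ii) when it does, lift $\sigma$ to an element of $\Gamma_d^+$.

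For step (i), I would exploit the explicit self-intersections $v_Y^2 = -d/3$ (if $d \equiv 0 \pmod 6$) and $v_Y^2 = -3d$ (if $d \equiv 2 \pmod 6$) recalled above. In the first case $\det(\Z h \oplus \Z v_Y) = d = \det K_d$, forcing $K_d = \Z h \oplus \Z v_Y$, so $\sigma$ is manifestly an isometry. In the second case $\det(\Z h \oplus \Z v_Y) = 9d = 9\det K_d$, so $K_d$ is an index-$3$ overlattice of $\Z h \oplus \Z v_Y$; any representative of a non-trivial coset has the form $(ah + bv_Y)/3$ with $a, b$ both coprime to $3$ (otherwise $h/3$ or $v_Y/3$ would be in $K_d$, contradicting primitivity). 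Applying $\sigma$ produces $(ah - bv_Y)/3$, and the difference $(2b/3) v_Y$ cannot lie in $K_d$ because $v_Y$ is primitive and $\gcd(2b, 3) = 1$. Thus $\sigma$ does not preserve $K_d$, so $O(K_d, h)$ is trivial and $\Gamma_d^+ = G_d^+$.

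For step (ii), applied to $d \equiv 0 \pmod 6$, I would invoke Nikulin's gluing theory for the primitive embedding $K_d \hookrightarrow L$. Because $L$ is unimodular, the discriminant data furnishes a canonical anti-isometry $\phi: d(K_d) \xrightarrow{\sim} d(K_d^\perp)$, and an isometry $\sigma$ of $K_d$ extends to an element of $O(L)$ preserving $K_d$ if and only if there is some $\tau \in O(K_d^\perp)$ inducing the automorphism $\phi \circ \bar\sigma \circ \phi^{-1}$ on $d(K_d^\perp)$. Via Proposition \ref{K_dperp}, $d(K_d^\perp) \cong \Z/(d/3) \oplus \Z/3$ and $\bar\sigma$ acts trivially on the $\Z/3$-summand (coming from $h$) and as $-1$ on the $\Z/(d/3)$-summand (coming from $v_Y$). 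Since $K_d^\perp$ has signature $(2, 19)$ and its discriminant group is generated by at most two elements, Nikulin's surjectivity theorem guarantees that the reduction $O(K_d^\perp) \to O(d(K_d^\perp))$ is surjective, so the desired $\tau$ exists; a further adjustment of $\tau$ by elements of the kernel (such as reflections in positive-norm classes of $K_d^\perp$) ensures that $g := \sigma \oplus \tau \in O(L)$ lies in $\Gamma^+$ and hence in $\Gamma_d^+$, so $[\Gamma_d^+ : G_d^+] = 2$.

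The statement about the forgetful map is then formal: $\mathcal{D}_d^{\text{mar}} \to \mathcal{D}_d^{\text{lab}}$ is the quotient by the finite group $\Gamma_d^+/G_d^+$, which is trivial (yielding an isomorphism) for $d \equiv 2 \pmod 6$ and of order two (yielding a double cover) for $d \equiv 0 \pmod 6$. The main obstacle I anticipate is the orientation bookkeeping at the end of step (ii): verifying that the Nikulin lift $\tau$ can be chosen so that $\sigma \oplus \tau$ preserves the chosen component $\mathcal{D}'$ involves a careful spinor-norm computation, though the existence of suitable orientation-reversing reflections in the kernel of the reduction map is standard.
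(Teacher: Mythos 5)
Your argument is correct, and its first half (the computation of $O(K_d,h)$) is essentially the standard one: for $d\equiv 0\,(\text{mod}\,6)$ the determinant count gives $K_d=\Z h\oplus\Z v_Y$ so the involution $\sigma$ preserves $K_d$, while for $d\equiv 2\,(\text{mod}\,6)$ the index-three overlattice structure obstructs it — this is exactly what Hassett's Propositions 3.2.2 and 3.2.5 are set up to deliver, and your coset argument is a clean way to run it. Where you diverge is step (ii): the paper (following the proof in \cite{Hass} that it imports, see Remark \ref{rmkgamma}) does not appeal to Nikulin's surjectivity theorem at all, but simply exhibits the extension explicitly as the automorphism $\gamma$ of $L^0=A_2(-1)\oplus U^{\oplus 2}\oplus E_8(-1)^{\oplus 2}$ acting as $-1$ on the two hyperbolic planes and as the identity elsewhere; since $v_Y=e_1-(d/6)f_1$ lies in the first copy of $U$ when $d\equiv 0\,(\text{mod}\,6)$, this $\gamma$ visibly fixes $h$, negates $v_Y$, and preserves the component $\mathcal{D}'$ (it negates both positive directions of $U^{\oplus 2}$), so the orientation bookkeeping you flag as the main obstacle evaporates. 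Your abstract route via the gluing anti-isometry $d(K_d)\cong d(K_d^{\perp})$, Proposition \ref{K_dperp}, and the surjectivity of $O(K_d^{\perp})\to O(d(K_d^{\perp}))$ (rank $21\geq l+2$, even, indefinite) does work, and it generalizes to situations where no explicit model of the embedding is available; the explicit $\gamma$ is shorter and, more importantly for this paper, is used again later (diagram \eqref{DC2bis} and Lemma \ref{lemmadequiv0} need its concrete action on $h_{k_i}$ and $v_{k_i}$), so the constructive version buys more downstream. The only point you gloss over is that ``double cover'' requires the induced involution on $\mathcal{D}_d^{\text{mar}}$ to be generically nontrivial, not merely that $[\Gamma_d^+:G_d^+]=2$; this is true (and is in effect re-proved by Lemma \ref{lemmadequiv0}), but it is not purely formal.
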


\begin{rmk}
\label{rmkgamma}
Assume $d \equiv 0 (\text{mod}\,6)$ and let $y$ be a point in $\mathcal{D}_d^{\text{lab}}$. By Proposition \ref{markedvslab}, the fiber on $y$ of the map $\mathcal{D}_d^{\text{mar}} \rightarrow \mathcal{D}_d^{\text{lab}}$ contains two elements, which we denote by $y_1$ and $y_2$. Then, the Hodge structures on $L^0$ represented by $y_1$ and $y_2$ are related by the automorphism $\gamma$ of $L^0$, acting as the multiplication by $-1$ on the two hyperbolic planes and as the identity elsewhere. The automorphism $\gamma$ is an element of $\Gamma^+$, but it does not belong to $G_d^+$ for these values of the discriminant (see \cite{Hass}, the proof of Proposition 5.3.1).  
\end{rmk}

On the other hand, let us recall the construction of the global period domain for \emph{degree $d$ polarized K3 surfaces}. The degree two cohomology group of a K3 surface with the usual intersection form $(\,,\,)$ is isometric to the lattice $\Lambda:=E_8(-1)^{\oplus 2} \oplus U^{\oplus 3}$. We denote by $\Sigma_d$ the group of automorphisms of $\Lambda$, preserving the element $l:=e_1 + (d/2)f_1 \in U$. Let $\Lambda_d^0$ be the orthogonal complement of $l$ in $\Lambda$. Then, the quadric
$$\lbrace x \in \p(\Lambda^0_d \otimes \C): (x,x)=0, (x,\bar{x})>0 \rbrace$$
has two connected components $\mathcal{N}_d^{\prime}$ and $\overline{\mathcal{N}}_d^{\prime}$. Let $\Sigma_d^+$ be the subgroup of $\Sigma_d$ preserving the component $\mathcal{N}_d^{\prime}$: the quotient 
$$\mathcal{N}_d:=\Sigma_d^+ \setminus \mathcal{N}_d^{\prime}$$
is the global period domain for K3 surfaces with polarization class of degree $d$.

The following result follows from the fact that an isometry $K_d^\perp \cong \Lambda_d^0$ induces the isomorphisms $\mathcal{D}_d' \cong \mathcal{N}_d'$ and $G_d^+ \cong \Sigma_d^+$.

\begin{thm}[\cite{Hass}, Theorem 5.3.2, 5.3.3]
\label{Immmodspace}
Let $d$ be a positive integer satisfying conditions \emph{(0)} and \emph{\textbf{(a)}}. Then, there exists an isomorphism 
$$j_d: \mathcal{D}_d^{\text{mar}} \rightarrow \mathcal{N}_d,$$
induced by $K_d^\perp \cong \Lambda_d^0$, which is unique up to the choice of an element in $\emph{Iso}(d(K_d^{\perp}),d(\Lambda_d^0))/(\pm 1)$. 
\end{thm}

\subsection{Mukai lattice for $\A_Y$}
Let us now consider the categorical framework. The subcategory $\A_Y$ of a cubic fourfold $Y$ behaves in a certain way as the derived category of a K3 surface. Indeed, the Serre functor on $\mathcal{A}_Y$ is the shift by two and the Hochschild cohomology of $\mathcal{A}_Y$ is isomorphic to that of a K3 surface (see \cite{Kuz3}, Corollary 4.3 and \cite{Kuz2}, Proposition 4.1). In \cite{Kuz1}, Kuznetsov proved that for certain special cubic fourfolds $Y$, there exist a K3 surface $X$ and an equivalence of categories $\A_Y \xrightarrow{\sim} \D(X)$. In general, if this condition is satisfied, we say that $\A_Y$ is \emph{geometric}. In \cite{AT}, Addington and Thomas explained the relation between Kuznetsov's K3 surface and Hassett's Hodge theoretic associated K3 surface. Indeed, let $K_{\text{top}}(Y)$ be the topological K-theory of $Y$, which in this case is simply the Grothendieck group of topological complex vector bundles on $Y$, endowed with the Euler pairing $\chi$. Let
$$v_M: K_{\text{top}}(Y) \otimes \Q \xrightarrow{\cong} \bigoplus_{p=0}^4 H^{2p}(Y,\Q)(p),$$ 
be the isomorphism induced by the Mukai vector, defined as $v_M(-):=\text{ch}(-)\sqrt{\text{td}(Y)}$. 
Then $v_M$ induces a weight-zero Hodge structure on
$$K_{\text{top}}(\A_Y):=\lbrace \mathsf{k} \in K_{\text{top}}(Y): \chi([\mathcal{O}_Y(i)],\mathsf{k})=0, \text{ for all } i=0,1,2 \rbrace.$$
We denote by $\tilde{H}(\A_Y,\Z)$ the lattice $K_{\text{top}}(\A_Y)(-1)$ with the induced weight-two Hodge structure and Euler form with reversed sign: it is isomorphic to the lattice $\tilde{\Lambda}:=E_8(-1)^{\oplus 2} \oplus U^{\oplus 4}$ and it is called the \emph{Mukai lattice} of $Y$ (see \cite{AT}, Section 2.3). Let
$$N(\A_Y):= \tilde{H}^{1,1}(\A_Y,\Z)=\tilde{H}^{1,1}(\A_Y) \cap \tilde{H}(\A_Y,\Z)$$
be the \emph{generalized Néron-Severi lattice} of $\A_Y$ and we denote by $T(\A_Y)$ its orthogonal complement in $\tilde{H}(\A_Y,\Z)$, which is the \emph{generalized trascendental lattice} of $\A_Y$. Then, there exist two elements $\lambda_1, \lambda_2$ in $N(\A_Y)$ spanning a rank two sublattice with intersection matrix
\[
A_2:=\begin{pmatrix}
2  & -1 \\ 
-1 & 2
\end{pmatrix}. 
\]

\begin{prop}[\cite{AT}, Proposition 2.3]
\label{propMv}
The Mukai vector induces an isometry between the orthogonal complement $A_2^{\perp}$ of $A_2$ in $\tilde{H}(\A_Y,\Z)$ and the primitive lattice $\langle h \rangle^{\perp}=H^4(Y,\Z)_{\text{prim}}(1)$. Moreover, if $\mathsf{k}_1, \dots, \mathsf{k}_n$ are elements of $\tilde{H}(\A_Y,\Z)$, then $v_M$ induces an isometry 
$$\langle \lambda_1, \lambda_2, \mathsf{k}_1, \dots, \mathsf{k}_n \rangle^{\perp} \cong \langle h, c_2(\mathsf{k}_1), \dots, c_2(\mathsf{k}_n) \rangle^{\perp}.$$
\end{prop}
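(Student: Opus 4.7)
The plan is to realize $A_2^\perp \subset \tilde{H}(\A_Y,\Z)$ as the Mukai--orthogonal complement, inside $K_{\text{top}}(Y)$, of the sublattice spanned by $[\mathcal{O}_Y], [\mathcal{O}_Y(1)], [\mathcal{O}_Y(2)], \lambda_1, \lambda_2$, and then to transport this via $v_M$ to $H^*(Y,\Q)$, where the five Mukai vectors in question should cut out exactly $H^4(Y,\Q)_{\text{prim}}$. By Hirzebruch--Riemann--Roch, $v_M$ intertwines the Euler pairing with the Mukai pairing, so $\mathsf{k} \in A_2^\perp$ if and only if $v_M(\mathsf{k})$ is Mukai--orthogonal to each of $v_M([\mathcal{O}_Y(i)])$ and $v_M(\lambda_j)$.

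The central claim is that the $\Q$--span $S$ of these five Mukai vectors coincides with the five-dimensional ``vertical algebraic'' subspace
$$V := H^0(Y,\Q) \oplus H^2(Y,\Q) \oplus \Q h \oplus H^6(Y,\Q) \oplus H^8(Y,\Q).$$
The inclusion $S \subseteq V$ is immediate for $v_M([\mathcal{O}_Y(i)]) = e^{iH}\sqrt{\text{td}(Y)}$, since the Chern classes of $Y$ are polynomials in $H$; for $v_M(\lambda_j)$ one uses the explicit description of $\lambda_1, \lambda_2$ (from \cite{AT}) as projections to $\A_Y$ of twists of the structure sheaf of a line on $Y$, whose Mukai vectors again lie in $V$. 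Linear independence of the five generators is then a direct check: the three exponentials are independent by a Vandermonde-type argument in the variables $0,1,2$, while $\lambda_1, \lambda_2$ span an $A_2$-sublattice of $\tilde{H}(\A_Y,\Z)$ and are by definition orthogonal to the $[\mathcal{O}_Y(i)]$, forcing $v_M(\lambda_1), v_M(\lambda_2)$ to be linearly independent modulo the span of the $v_M([\mathcal{O}_Y(i)])$.

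Next I would compute $V^\perp$ with respect to the Mukai pairing. Since this pairing couples $H^{2i}$ nondegenerately with $H^{8-2i}$ and vanishes on other cross--pairings, and since $V$ contains $H^0, H^2, H^6, H^8$ in full together with $\Q h \subset H^4$, any element of $V^\perp$ must vanish in degrees $0, 2, 6, 8$ and be intersection-orthogonal to $h$ in degree $4$ (the Mukai pairing and the cup-product coincide on $H^4$ up to a fixed sign). Hence $V^\perp = H^4(Y,\Q)_{\text{prim}}$, and the dimension count $27 - 5 = 22 = \text{rk}\, H^4(Y,\Z)_{\text{prim}}$ is consistent. Together with the previous paragraph, this produces a pairing-preserving isomorphism $v_M \colon A_2^\perp \otimes \Q \xrightarrow{\sim} H^4(Y,\Q)_{\text{prim}}$. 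Integrality---that this restricts to a bijection $v_M(A_2^\perp) = H^4(Y,\Z)_{\text{prim}}$---follows because the $H^4$-component of $v_M(\mathsf{k})$ is $\text{ch}_2(\mathsf{k})$, which is integral for any $\mathsf{k} \in K_{\text{top}}(Y)$, and conversely every $x \in H^4(Y,\Z)_{\text{prim}}$ lifts to a unique element of $A_2^\perp$ via the splitting of $K_{\text{top}}(Y)$.

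The ``moreover'' part is then an immediate corollary of the first: given any $\mathsf{k}_1, \dots, \mathsf{k}_n \in K_{\text{top}}(\A_Y)$, restricting the isomorphism $v_M \colon A_2^\perp \xrightarrow{\sim} \langle h\rangle^\perp$ to the further orthogonal complement of the $\mathsf{k}_j$ on the left yields orthogonality to $v_M(\mathsf{k}_j)$ on the right, which is precisely $\langle h, v_M(\mathsf{k}_1), \dots, v_M(\mathsf{k}_n)\rangle^\perp$. The main technical hurdle is the explicit verification that $v_M(\lambda_1), v_M(\lambda_2) \in V$; this depends on the precise description of $\lambda_1, \lambda_2$ and a Grothendieck--Riemann--Roch computation of Chern characters of sheaves supported on a line inside $Y$, which is routine but unavoidable.
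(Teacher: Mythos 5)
First, a remark on the comparison itself: the paper does not prove this statement. It is imported verbatim as \cite{AT}, Proposition 2.3, so there is no internal proof to measure your attempt against; the natural benchmark is the argument of Addington--Thomas, and your reconstruction follows its outline (identify the span of the five Mukai vectors $v_M([\mathcal{O}_Y(i)])$, $v_M(\lambda_j)$ with the five-dimensional ``vertical'' subspace $V$, compute $V^{\perp}=H^4(Y,\Q)_{\text{prim}}$ degree by degree, and deduce the ``moreover'' clause by restricting a pairing-preserving bijection to further orthogonal complements). All of that, including the linear-independence argument and the observation that $\operatorname{ch}_2(\mathsf{k})=-c_2(\mathsf{k})$ is integral once $\operatorname{ch}_0=\operatorname{ch}_1=0$, is sound and gives the rational isometry $A_2^{\perp}\otimes\Q\cong H^4(Y,\Q)_{\text{prim}}$ together with the inclusion $v_M(A_2^{\perp})\subseteq H^4(Y,\Z)_{\text{prim}}$.

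The genuine gap is the reverse inclusion, which you dispose of in one clause: ``conversely every $x\in H^4(Y,\Z)_{\text{prim}}$ lifts to a unique element of $A_2^{\perp}$ via the splitting of $K_{\text{top}}(Y)$.'' No such splitting does the job: the decomposition of $K_{\text{top}}(Y)$ by the exceptional collection is only semiorthogonal, and $A_2\oplus A_2^{\perp}$ has index $3$ in $K_{\text{top}}(\A_Y)$, so neither produces a lift. Concretely, you must exhibit a class $\mathsf{k}\in K_{\text{top}}(Y)$ with $\operatorname{ch}(\mathsf{k})=x\cdot\bigl(\sqrt{\operatorname{td}Y}\bigr)^{-1}$, i.e.\ with prescribed rational components in degrees $6$ and $8$; whether such a class exists in topological K-theory is exactly the integrality question that occupies Section 2 of \cite{AT} (torsion-freeness of $H^*(Y,\Z)$, degeneration of the Atiyah--Hirzebruch spectral sequence, and control of the denominators of $\operatorname{ch}$). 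Alternatively, you can bypass the lifting problem with a discriminant count: your argument already yields an isometric embedding of $A_2^{\perp}$ into $H^4(Y,\Z)_{\text{prim}}$ between lattices of the same rank $22$; granting that $K_{\text{top}}(\A_Y)$ is unimodular (it is the orthogonal complement of a unimodular sublattice spanned by an exceptional collection inside the unimodular $K_{\text{top}}(Y)$ --- itself a fact needing proof) and that $A_2$, having square-free discriminant, is automatically primitively embedded, both lattices have discriminant $3$, and a finite-index isometric embedding multiplies the discriminant by the square of the index, forcing the index to be $1$. One of these two routes must be supplied; as written, the step that makes the statement an integral rather than rational isometry is missing.
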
 

\begin{rmk}
\label{rmksc}
Since, by definition, the lattice $A_2$ is contained in $N(\A_Y)$, the orthogonality condition implies that $T(\A_Y)$ is in $A_2^{\perp}$. In particular, as observed in \cite{Huy}, Section 3.3, the orthogonal complement to the trascendental lattice in $A_2^{\perp}$ is $N(A_Y) \cap A_2^{\perp}$.
\end{rmk}

\begin{thm}[\cite{AT}, Theorem 1.1]
\label{AddThom}
If $\A_Y$ is geometric, then $Y$ belongs to $\mathcal{C}_d$ for some $d$ satisfying condition \emph{\textbf{(a)}} of Theorem \ref{propHass}. Conversely, for each $d$ satisfying \emph{\textbf{(a)}}, the set of cubic fourfolds $Y$ in $\mathcal{C}_d$ for which $\A_Y$ is geometric forms a Zariski open dense subset.
\end{thm}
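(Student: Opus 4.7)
The plan is to handle the two implications separately; the forward direction reduces to a lattice computation, while the converse requires deformation theory anchored at a concrete example.

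For the forward implication, I would assume an equivalence $\Phi\colon \A_Y \xrightarrow{\sim} \D(X)$ exists. Any such equivalence is of Fourier-Mukai type (Orlov-Kuznetsov), so it induces a Hodge isometry of Mukai lattices $\tilde H(\A_Y,\Z) \cong \tilde H(X,\Z)$. The classes of $\mathcal{O}_X$ and of a closed point span a primitive hyperbolic plane $U \subset N(X)$, which pulls back to a primitive embedding $U \hookrightarrow N(\A_Y)$ orthogonal to the distinguished $A_2 = \langle \lambda_1, \lambda_2\rangle$. Applying Proposition \ref{propMv} to the images under $v_M$ yields a rank-two primitive sublattice of $H^{2,2}(Y,\Z)\cap L$ containing $h$, so $Y$ belongs to some $\mathcal{C}_d$ with labelling $K_d$ whose orthogonal complement $K_d^\perp$ contains a copy of $U$. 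A direct comparison of $q_{K_d^\perp}$, as given in Proposition \ref{K_dperp}, with the discriminant form of $\langle l \rangle^\perp \subset \Lambda$ (where $l=e_1+(d/2)f_1$) then forces $d$ to satisfy the numerical constraints \eqref{ExK3A}.

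For the converse I would show that for each admissible $d$, the subset
\[
\mathcal{C}_d^{\text{geom}} := \{Y \in \mathcal{C}_d : \A_Y \text{ is geometric}\}
\]
is Zariski open and non-empty, hence dense by irreducibility of $\mathcal{C}_d$. Openness follows from the deformation theory of Fourier-Mukai kernels: given $Y_0 \in \mathcal{C}_d^{\text{geom}}$ with kernel $P_0$ realizing $\A_{Y_0} \cong \D(X_0)$, the identification of Hochschild cohomologies recalled in Section 1 matches first-order deformations of $\A_Y$ with those of $\D(X_0)$, and a Toda-type theorem on deformations of Fourier-Mukai kernels allows $P_0$ to be propagated over a Zariski neighbourhood of $Y_0$ in the universal family of cubics in $\mathcal{C}_d$, producing a simultaneous deformation $X_s$ of $X_0$ with equivalences $\A_{Y_s}\cong \D(X_s)$. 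For non-emptiness I would anchor the argument at Kuznetsov's Pfaffian example in $\mathcal{C}_{14}$, and propagate to arbitrary admissible $d$ by constructing a K3 surface $X'$ carrying two distinct primitive polarizations of degrees $14$ and $d$ (using Nikulin's existence criterion for primitive lattice embeddings, in the spirit of the proof of Theorem \ref{Immmodspace}); the degree-$14$ polarization realizes $X'$ as associated to a Pfaffian $Y_{14}\in \mathcal{C}_{14}^{\text{geom}}$, while the degree-$d$ one, tracked through the isomorphism $\mathcal{D}_d^{\text{mar}} \cong \mathcal{N}_d$ of Theorem \ref{Immmodspace}, gives a cubic $Y_d \in \mathcal{C}_d$ inheriting the same K3 equivalence, hence geometric.

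The main obstacle is the non-emptiness step. It requires simultaneously constructing a K3 $X'$ with two prescribed polarizations and compatible transcendental structure (the careful Nikulin application), and arguing that the categorical equivalence from the Pfaffian side genuinely transfers to the cubic $Y_d$ on the $\mathcal{C}_d$ side — the latter being essentially a lifting of a Hodge isometry to a Fourier-Mukai kernel, which is the technical heart of the Addington-Thomas proof. By contrast, both the forward lattice computation and the openness argument should be largely routine once the Hochschild-based deformation machinery is in place.
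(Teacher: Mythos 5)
First, a point of bookkeeping: this statement is quoted verbatim from \cite{AT} and the paper contains no proof of it, so your proposal can only be measured against the Addington--Thomas argument itself. Your forward direction is essentially their route: an equivalence $\A_Y\simeq\D(X)$ is of Fourier--Mukai type, induces a Hodge isometry of Mukai lattices, and the Mukai vectors of $\mathcal{O}_X$ and of a point span a primitive hyperbolic plane $U\subset N(\A_Y)$; the reduction from ``$N(\A_Y)$ contains $U$'' to the numerical conditions \eqref{ExK3A} is their Theorem~3.1, a discriminant-form computation. One correction: the copy of $U$ need \emph{not} be orthogonal to the distinguished $A_2$, and the argument does not require it to be; Proposition~\ref{propMv} is applied to a primitive vector of $N(\A_Y)\cap A_2^{\perp}$ produced by lattice theory, not to $U$ itself.

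The genuine gap is in your non-emptiness step for the converse. A K3 surface $X'$ carrying primitive polarizations of degrees $14$ and $d$ yields, via Theorem~\ref{Immmodspace}, two \emph{different} cubic fourfolds $Y_{14}\in\mathcal{C}_{14}$ and $Y_d\in\mathcal{C}_d$, each Hodge-theoretically associated to $X'$. There is no mechanism by which $Y_d$ ``inherits'' the equivalence $\A_{Y_{14}}\simeq\D(X')$: upgrading a Hodge-theoretic association to a categorical equivalence is exactly the content of the theorem being proved, so as written this step is circular. What Addington and Thomas actually do is anchor at a \emph{single} cubic $Y_0$ in the intersection $\mathcal{C}_{14}\cap\mathcal{C}_d$ (shown to be nonempty, with $Y_0$ chosen Pfaffian so that $\A_{Y_0}\simeq\D(X_0)$ by Kuznetsov's explicit construction) and then deform the Fourier--Mukai kernel along $\mathcal{C}_d$, moving off $\mathcal{C}_{14}$. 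In that deformation, condition \eqref{ExK3A} is precisely what guarantees that the hyperbolic plane persists in $N(\A_{Y_s})$ for every $Y_s\in\mathcal{C}_d$, so the deformed partner remains an untwisted K3 surface; and the extension of the kernel to all orders together with algebraization is the technical heart, not a routine invocation of a Toda-type statement. Relatedly, your decomposition into ``openness of the full locus $\mathcal{C}_d^{\mathrm{geom}}$'' plus ``non-emptiness'' is not how the argument runs: one directly produces a Zariski open dense subset by propagating the kernel from the anchor point inside $\mathcal{C}_d$.
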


\begin{rmk}
\label{rmk_BLM+}
In \cite{BLM+}, Corollary 1.7, the authors prove that every $Y \in \mathcal{C}_d$ for $d$ satisfying {\textbf{(a)}} is geometric, extending Addington and Thomas' result to the whole divisor. 
\end{rmk}

In \cite{Huy}, Proposition 3.4, Huybrechts proved that, given two cubic fourfolds $Y$ and $Y'$, the existence of a Fourier-Mukai equivalence $\A_Y \xrightarrow{\sim} \A_{Y'}$ implies the existence of a Hodge isometry of the corresponding Mukai lattices. The surprising fact is that, under some assumptions, the category $\A_Y$ is completely determined by the Hodge structure on $\tilde{H}(\A_Y,\Z)$, as we recall in the next section.   


\subsection{Associated twisted K3 surface}
In \cite{Huy}, Huybrechts generalized Theorem \ref{propHass} and Theorem \ref{AddThom} to the case of cubic fourfolds admitting an \emph{associated twisted K3 surface}. We recall that a twisted K3 surface is the data of a K3 surface $X$ and a class in the Brauer group $H^2(X,\mathcal{O}^{\ast}_X)_{\text{tors}}$ of $X$. Following \cite{HuySt0}, Section 2, let $B$ be a rational class of $H^2(X,\Q)$, which is sent to $\alpha$ through the composition
$$H^2(X,\Q) \rightarrow H^2(X,\mathcal{O}_X) \xrightarrow{\text{exp}} H^2(X,\mathcal{O}^{\ast}_X).$$ 
We say that $B$ is a B-field lift of $\alpha$. We denote by $\tilde{H}(X,\alpha,\Z)$ the cohomology ring $H^*(X,\Z)$ with the Mukai pairing and the weight two Hodge structure defined by
$$\tilde{H}^{2,0}(X,\alpha):=\text{exp}(B)H^{2,0}(X) \quad \text{and} \quad \tilde{H}^{1,1}(X,\alpha):=\text{exp}(B)H^{1,1}(X).$$ 
We see that $\tilde{H}(X,\alpha,\Z)$ is isomorphic as a lattice to $\tilde{\Lambda}$ and we call it the Mukai lattice of $(X,\alpha)$. We can consider the algebraic part
$$N(X,\alpha)=\tilde{H}^{1,1}(X,\alpha,\Z):=\tilde{H}^{1,1}(X,\alpha) \cap \tilde{H}(X,\alpha,\Z)$$
and we define the generalized twisted trascendental lattice $T(X,\alpha)$ as the orthogonal complement of $N(X,\alpha)$ with respect to the Mukai pairing. On the other hand, using the intersection product with $B$, we can identify the class $\alpha$ with a surjective morphism $\alpha: T_X \rightarrow \Z/ \text{ord}(\alpha)\mathbb{Z}$. Then, the kernel of $\alpha$ is isomorphic via $\text{exp}(B)$ to $T(X,\alpha)$ (see \cite{Huyv}, Proposition 4.7). For this reason, we will use the same notation for $T(X,\alpha)$ and $\ker(\alpha)$ (resp.\ for $N(X,\alpha)$ and the orthogonal complement of $\ker(\alpha)$ in $\tilde{H}(X,\alpha,\Z)$), even if the first one is primitively embedded in $\tilde{H}(X,\alpha,\Z)$, while the second one is not.

As in the untwisted case, the condition of having an associated twisted K3 surface on the level of Hodge structures on the Mukai lattices depends only on the value of the discriminant $d$.  

\begin{thm}[\cite{Huy}, Theorem 1.3]
\label{exK3twist}
Let $Y$ be a cubic fourfold. There exist a twisted K3 surface $(X, \alpha)$ and a Hodge isometry $\tilde{H}(\A_Y, \Z) \cong \tilde{H}(X, \alpha, \Z)$ if and only if $Y$ belongs to $\mathcal{C}_d$ for $d$ such that
\begin{equation}\tag{\textbf{a'}}
\label{condd}
n_i \equiv 0 (\emph{mod}\,2) \text{ for all primes } p_i \equiv 2 (\emph{mod}\,3) \text{ in } 2d=\prod p_i^{n_i}.
\end{equation}
\end{thm}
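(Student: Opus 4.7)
The plan is to reduce the statement to the existence of a primitive isotropic vector in the algebraic Mukai lattice $N(\A_Y)$, and then translate that reduction into a representability condition for the discriminant $d$. The forward half of the reduction is immediate: for any twisted K3 surface $(X,\alpha)$ with B-field lift $B$, the class $(0,0,1)\in\tilde{H}(X,\alpha,\Z)$ is primitive and isotropic, and the Mukai pairing of $(0,0,1)$ with $\exp(B)\sigma=\sigma+B\wedge\sigma$ vanishes (the only terms that could contribute would pair $H^{0}$ against $H^{4}$, and the $H^{0}$ and $H^{4}$ components of $\exp(B)\sigma$ are zero), so $(0,0,1)\in N(X,\alpha)$. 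For the converse, given a K3-type Hodge structure on $\tilde{\Lambda}$ with a primitive isotropic class $v$ in the algebraic part, the quotient $v^{\perp}/v$ is a K3 lattice carrying an induced K3 Hodge structure which, by surjectivity of the K3 period map, is realized by an actual K3 surface $X$; the way $\tilde{\Lambda}$ extends $v^{\perp}/v$ by $v$ determines a B-field $B$ whose Brauer class $\alpha$ then provides the required twisted K3 structure with $\tilde{H}(X,\alpha,\Z)$ Hodge-isometric to $\tilde{H}(\A_Y,\Z)$.

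With this reduction in hand, I translate the existence of an isotropic vector in $N(\A_Y)$ into a condition on $d$. By Proposition \ref{propMv}, $v_M$ identifies $A_2^{\perp}$ isometrically with $\langle h\rangle^{\perp}$, and hence $A_2^{\perp}\cap N(\A_Y)$ with $\langle h\rangle^{\perp}\cap H^{2,2}(Y,\Z)$. For a general $Y\in\mathcal{C}_d$ the latter is generated by the primitive class $v_Y$ recalled before Proposition \ref{K_dperp}, with $v_Y^2=-d/3$ if $d\equiv 0\,(\text{mod}\,6)$ and $v_Y^2=-3d$ if $d\equiv 2\,(\text{mod}\,6)$, so $w:=v_M^{-1}(v_Y)\in A_2^{\perp}\cap N(\A_Y)$ has the same square. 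Using $\lambda_1^2=\lambda_2^2=2$ and $\lambda_1\cdot\lambda_2=-1$, the isotropy of $a\lambda_1+b\lambda_2+cw\in\langle\lambda_1,\lambda_2,w\rangle\subset N(\A_Y)$ becomes
\[
2(a^2-ab+b^2)+c^2w^2=0,
\]
and rationally every isotropic vector in $N(\A_Y)$ has this shape. So the question is whether $a^2-ab+b^2$ represents $c^2d/6$ (if $d\equiv 0\,(\text{mod}\,6)$), respectively $3c^2d/2$ (if $d\equiv 2\,(\text{mod}\,6)$), for some integers $a,b,c$ not all zero.

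At this point the classical number theory of the norm form of the Eisenstein integers $\Z[\omega]$ takes over. A positive integer $n$ lies in the image of $a^2-ab+b^2$ exactly when every prime $p\equiv 2\,(\text{mod}\,3)$ has even multiplicity in $n$; the set of representable integers is closed under multiplication, contains every square, and contains $3=1^2-1\cdot(-1)+(-1)^2$. Hence representability of $c^2d/6$ or $3c^2d/2$ for some $c$ is equivalent to representability of $d/6$, respectively $d/2$. Writing $2d=2^2\cdot 3\cdot (d/6)$ in the first case and $2d=2^2\cdot (d/2)$ in the second and tracking the exponents of the prime $2$ and of each odd prime $p\equiv 2\,(\text{mod}\,3)$ in $2d=\prod p_i^{n_i}$, one checks directly that the representability of $d/6$ or $d/2$ translates exactly into the parity condition of \eqref{condd}. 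Combined with the non-emptiness constraints $d>6$ and $d\equiv 0,2\,(\text{mod}\,6)$, this yields the equivalence asserted in the theorem.

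The main difficulty I expect is the converse part of the lattice reformulation: explicitly constructing the twisted K3 from an isotropic vector in $N(\A_Y)$ and verifying that the Brauer class $\alpha$ extracted from the extension data produces a Hodge isometry (not just an abstract lattice isomorphism) between $\tilde{H}(X,\alpha,\Z)$ and $\tilde{H}(\A_Y,\Z)$. This relies on a twisted version of the surjectivity of the K3 period map together with a careful analysis of the discriminant form data recorded in Proposition \ref{K_dperp}, needed to pin down the Brauer class canonically. Once this lattice-theoretic machinery is in place, the rest of the argument is the number-theoretic bookkeeping sketched above.
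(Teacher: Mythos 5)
This statement is quoted in the paper as \cite{Huy}, Theorem 1.3, and the paper gives no proof of its own; so the only meaningful comparison is with the argument in the cited source, and your proposal follows essentially that same route: reduce the existence of an associated twisted K3 surface to the existence of a primitive isotropic class in $N(\A_Y)$, then translate, via the classes $\lambda_1,\lambda_2$ and $\mathsf{v}_Y$ with $\mathsf{v}_Y^2=-d/3$ or $-3d$, into representability of $c^2d/6$ (resp.\ $3c^2d/2$) by the Eisenstein norm form, which gives exactly the parity condition on $2d$. The reduction, the passage to the finite-index sublattice $\langle\lambda_1,\lambda_2,\mathsf{v}_Y\rangle$ (harmless, since representing zero is insensitive to finite index), and the number-theoretic bookkeeping are all correct. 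Two small points: the parenthetical justification that $(0,0,1)\perp\exp(B)\sigma$ is garbled --- the $H^4$-component of $\exp(B)\sigma$ is $B\wedge\sigma$, which need not vanish; what matters is only that the $H^0$-components of both vectors vanish --- and the theorem is stated for arbitrary $Y$, so in the ``only if'' direction you should take the labelling determined by the saturation of $\langle A_2, v\rangle$ for your isotropic $v$ rather than assuming $Y$ general; the computation is unchanged. The genuinely nontrivial input, as you note, is the converse of the lattice reduction (twisted surjectivity of the period map and the extraction of the Brauer class from the extension $0\to\Z v\to v^{\perp}\to v^{\perp}/v\to 0$), which is the content of \cite{Huyv} and is where the real work in \cite{Huy} lies.
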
 

Moreover, Theorem \ref{AddThom} and Remark \ref{rmk_BLM+} have the following form in the twisted setting.
\begin{itemize}
\item If there exists a twisted K3 surface $(X,\alpha)$ such that the category $\A_Y$ is equivalent to the derived category $\D(X,\alpha)$ of bounded complexes of $\alpha$-twisted coherent sheaves on $X$, then the cubic fourfold $Y$ belongs to $\mathcal{C}_d$ for $d$ satisfying condition \eqref{condd} of Theorem \ref{exK3twist} (see \cite{Huy}, Theorem 1.4(i)).
\item In \cite{Huy}, Theorem 1.4(ii), Huybrechts proved that if $d$ satisfies \eqref{condd}, then a Zariski open subset of cubic fourfolds $Y$ in the divisor $\mathcal{C}_d$ have $\A_Y \xrightarrow{\sim} \D(X,\alpha)$. In \cite{BLM+}, Proposition 32.2, the authors extend this result to all cubic fourfolds in $\mathcal{C}_d$.
\end{itemize}

Finally, we recall the following result, crucial for the proofs in the next sections, which states that the Hodge structure on the Mukai lattice determines the K3 category for cubic fourfolds with (twisted) associated K3 surface. 

\begin{thm}[\cite{Huy}, Theorem 1.5(ii)]
\label{thm_Huy}
For $d$ satisfying \eqref{condd} and a Zariski dense open set of cubic fourfolds $Y \in \mathcal{C}_d$, there exists a Fourier-Mukai equivalence $\A_Y \xrightarrow{\sim} \A_{Y'}$ if and only if there exists a Hodge isometry $\tilde{H}(\A_Y,\Z) \cong \tilde{H}(\A_{Y'},\Z)$. 
\end{thm}

\begin{rmk}
\label{rmk_BLMS+Huy}
Using that every $Y$ in such a divisor has $\A_Y \xrightarrow{\sim} \D(X,\alpha)$ by \cite{BLM+}, we can extend this result to all the divisor $\mathcal{C}_d$ following the same proof of \cite{Huy}.
\end{rmk}

\begin{rmk}
\label{rmk_pp}
Set
$$\tilde{Q}:=\lbrace \varphi \in \p(\tilde{\Lambda} \otimes \C): (\varphi,\varphi)=0, (\varphi,\bar{\varphi})>0 \rbrace.$$
A point $\varphi \in \tilde{Q}$ is of K3 type (resp.\ of twisted K3 type) if there is a K3 surface $X$ (resp.\ a twisted K3 surface $(X,\alpha)$) such that the Hodge structure defined by $\varphi$ on $\tilde{\Lambda}$ is Hodge isometric to $\tilde{H}(X,\Z)$ (resp.\ $\tilde{H}(X,\alpha,\Z)$) (see \cite{Huy}, Definition 2.5). We denote by $Q_{\text{K3}}$ (resp.\ $Q_{\text{K3}'}$) the set of points of K3 type (resp.\ of twisted K3 type) in $\tilde{Q}$. 

Notice that $\mathcal{D}' \subset Q \subset \tilde{Q}$, as $L^0 \cong A_2^{\perp}$. Thus, we can consider the sets
$$\mathcal{D}_{\text{K3}}:= Q_{\text{K3}} \cap \mathcal{D'} \quad \text{and} \quad \mathcal{D}_{\text{K3}'}:= Q_{\text{K3}'} \cap \mathcal{D'},$$
containing period points in $\mathcal{D}'$ of (twisted) K3 type (see \cite{Huy}, Section 2.5).
\end{rmk}

\subsection{Counting formulas for Fourier-Mukai partners of a K3 surface}
The aim of this subsection is to recollect some known formulas which count the number of isomorphism classes of (twisted) Fourier-Mukai partners of a given (twisted) K3 surface. We recall that a \emph{twisted Fourier-Mukai partner} of a K3 surface $X$ (resp.\ a twisted K3 surface $(X,\alpha)$) is a twisted K3 surface $(X',\alpha')$ such that there exists an equivalence of categories $\D(X) \xrightarrow{\sim} \D(X',\alpha')$ (resp.\ $\D(X,\alpha) \xrightarrow{\sim} \D(X',\alpha')$); if the Brauer class $\alpha'$ is trivial, we say that the Fourier-Mukai partner is untwisted. 

The first result concerns the number of isomorphism classes of untwisted Fourier-Mukai partners of a very general polarized K3 surface, which is determined by the number of distinct primes in the factorization of the degree of the polarization class.  

\begin{thm}[\cite{Og}, Proposition 1.10]
\label{exFMk3}
Let $X$ be a K3 surface with Néron-Severi lattice $\text{NS}(X)$ of rank one generated by a polarization class $l_X$ such that $l_X^2=2n$. Let $m$ be the number of (isomorphism classes of) Fourier-Mukai partners of $X$; then we have:
\begin{itemize}
\item $m=1$, if $l_X^2=2$ or $l_X^2=2^a$,
\item $m=2^{h-1}$, if $l_X^2=2 p_1^{e_1}\cdot \dots p_h^{e_h}$,
\item $m=2^{h}$, if $l_X^2=2^a p_1^{e_1}\cdot \dots p_h^{e_h}$, 
\end{itemize}
where $a, h$ and the $e_i$'s are natural numbers with $a \geq 2$, the $p_i$'s are different primes such that $p_i \geq 3$.
\end{thm}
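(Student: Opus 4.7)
My plan is to translate the counting of Fourier--Mukai partners into a lattice-theoretic problem and then perform an explicit arithmetic computation on the discriminant form of $\langle 2n\rangle$. By the Mukai--Orlov derived Torelli theorem, two projective K3 surfaces $X$ and $X'$ are Fourier--Mukai partners if and only if there exists a Hodge isometry of transcendental lattices $T_X\cong T_{X'}$. Hence the isomorphism classes of Fourier--Mukai partners of $X$ are in bijection with the $O(\Lambda)$-orbits of primitive embeddings $T_X\hookrightarrow\Lambda$ (with its Hodge structure fixed) whose orthogonal complement is isomorphic to $\mathrm{NS}(X)=\langle 2n\rangle$, further quotiented by the Hodge-isometry group $O_{\mathrm{Hdg}}(T_X)$; for a generic $X$ in the moduli space of degree $2n$ polarized K3 surfaces, this Hodge-isometry group reduces to $\{\pm 1\}$.

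Since $\Lambda$ is unimodular, Nikulin's gluing theorem (\cite{Ni}, Proposition 1.15.1) identifies such primitive embeddings, modulo $O(\Lambda)$, with the set $\mathrm{Iso}(d(T_X),-q_{\mathrm{NS}(X)})$ of isometries of discriminant forms, modulo the natural action of $O(T_X)\times O(\mathrm{NS}(X))$. Because $T_X$ has signature $(2,19)$ and $d(T_X)\cong\mathbb{Z}/2n\mathbb{Z}$ is cyclic, Nikulin's surjectivity criterion ensures that $O(T_X)\twoheadrightarrow O(q_{T_X})$, while $O(\mathrm{NS}(X))=\{\pm 1\}$ acts on $d(\mathrm{NS}(X))$ as $\pm\mathrm{id}$. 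Under the canonical identification $O(q_{T_X})\cong O(q_{\langle 2n\rangle})$, and using that this group is abelian (it embeds into $(\mathbb{Z}/2n\mathbb{Z})^{\times}$), the double-coset count collapses to
\[
m=\bigl|\{\pm 1\}\backslash O(q_{\langle 2n\rangle})/\{\pm 1\}\bigr|=\frac{|O(q_{\langle 2n\rangle})|}{|\langle -1\rangle|},
\]
where the denominator equals $2$ unless $2n=2$, in which case $-1=1$ and the quotient is trivially $1$.

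It remains to compute $|O(q_{\langle 2n\rangle})|$ explicitly. A generator $e^{*}$ of $d(\langle 2n\rangle)=\mathbb{Z}/2n\mathbb{Z}$ satisfies $q(e^{*})\equiv \tfrac{1}{2n}\pmod{2\mathbb{Z}}$, so multiplication by $u\in(\mathbb{Z}/2n\mathbb{Z})^{\times}$ preserves $q$ precisely when $u^{2}\equiv 1\pmod{4n}$. Writing $2n=2^{a}\prod_{i=1}^{h}p_{i}^{e_{i}}$ and applying the Chinese Remainder Theorem, the condition splits: each odd factor $p_{i}^{e_{i}}$ contributes the two solutions $u\equiv\pm 1\pmod{p_{i}^{e_{i}}}$, while the $2$-part contributes one solution when $a=1$ and exactly two solutions when $a\geq 2$ (since for $a\geq 3$ the extra condition modulo $2^{a+1}$ is not automatic). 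Multiplying these local counts and dividing by $|\langle -1\rangle|$ recovers the three cases $m=1$, $m=2^{h-1}$ and $m=2^{h}$ stated in the theorem.

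The main obstacle I expect is the input from lattice theory on which the whole reduction rests: namely, the surjectivity of $O(T_X)\twoheadrightarrow O(q_{T_X})$ for the transcendental lattice of a general polarized K3 surface, together with the identification $O_{\mathrm{Hdg}}(T_X)=\{\pm 1\}$ at a generic period point. Once these facts are in hand, the passage to the double-coset formula and the subsequent prime-by-prime analysis via the Chinese Remainder Theorem are essentially routine.
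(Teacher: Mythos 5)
The paper does not prove this statement---it is imported from \cite{Og}, Proposition 1.10, with no internal argument---so there is no in-paper proof to compare against; your reconstruction is, in substance, Oguiso's own (and the Hosono--Lian--Oguiso--Yau) argument, and it is correct. The reduction to Hodge isometries of transcendental lattices via \cite{Or}, the translation of primitive embeddings $T_X\hookrightarrow\Lambda$ into gluing data on discriminant forms via \cite{Ni}, the observation that the genus of $\langle 2n\rangle$ is a single class with $\text{O}(\langle 2n\rangle)=\{\pm 1\}$, and the final count of $\{u\in(\Z/2n\Z)^{\times}:u^{2}\equiv 1\ (\text{mod}\ 4n)\}$ modulo $\pm\text{id}$ all check out; the local counts ($2$ solutions at each odd prime, $1$ at the prime $2$ when $a=1$, $2$ when $a\geq 2$) reproduce the three stated cases, including the classical $m=2$ in degree $12$. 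Two small corrections. First, the surjectivity $\text{O}(T_X)\twoheadrightarrow \text{O}(q_{T_X})$ that you single out as the key input is not what your double-coset formula actually uses---if the full image of $\text{O}(T_X)$ acted on the gluings you would get $m=1$ in every case; what is needed is only that every anti-isometry $q_{T_X}\cong -q_{\langle 2n\rangle}$ is realized by a primitive embedding, which follows from the uniqueness of the unimodular lattice of signature $(3,19)$, together with $\text{O}_{\text{Hdg}}(T_X)=\{\pm\text{id}\}$ on one side and $\text{O}(\langle 2n\rangle)=\{\pm 1\}$ on the other. Second, $\text{O}_{\text{Hdg}}(T_X)=\{\pm\text{id}\}$ holds for \emph{every} K3 surface with $\rho(X)=1$, not only for a generic one: a Hodge isometry acts on the period by a root of unity $\zeta$ whose degree $\varphi(\text{ord}\,\zeta)$ divides $\text{rk}\,T_X=21$, forcing $\zeta=\pm 1$, and irreducibility of $T_X\otimes\Q$ then forces $\pm\text{id}$. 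Since the theorem is stated for all rank-one $X$, your restriction to generic $X$ would otherwise leave a (easily repaired) gap.
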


More generally, Ma proved in \cite{Ma} a counting formula for isomorphism classes of twisted Fourier-Mukai partners of a twisted K3 surface $(X,\alpha)$ which admits an untwisted Fourier-Mukai partner (see \cite{Ma}, Theorem 1.1). Moreover, relaxing this hypothesis, he obtained an upper bound to the number of twisted Fourier-Mukai partners of $(X,\alpha)$. We conclude this paragraph by resuming Ma's construction, which will be useful in Section 4.

Let $(X,\alpha)$ be a twisted K3 surface with $\text{ord}(\alpha)= \kappa$. We recall that a twisted K3 surface $(X',\alpha')$ is isomorphic to $(X,\alpha)$ if there exists an isomorphism $F: X \cong X'$ such that $F^*\alpha'=\alpha$. We denote by $\FM^r(X,\alpha)$ the set of isomorphism classes of Fourier-Mukai partners $(X',\alpha')$ of $(X,\alpha)$, having $\alpha'$ of order $r$. We say that $(X_1,\alpha_1)$ and $(X_2,\alpha_2)$ in $\FM^r(X,\alpha)$ are $\sim$-equivalent if there exists a Hodge isometry $g: T_{X_1} \cong T_{X_2}$ such that $g^*\alpha_2=\alpha_1$. We define the quotient
$$\mathcal{FM}^r(X,\alpha):= \FM^r(X,\alpha)/\sim$$
and we denote by $\pi: \FM^r(X,\alpha) \twoheadrightarrow \mathcal{FM}^r(X,\alpha)$ the quotient map. Let $I^r(d(T(X,\alpha)))$ be the set of all isotropic elements of order $r$ of the discriminant group $(d(T(X,\alpha),q_{T(X,\alpha)})$ of $T(X,\alpha)$, i.e.
$$I^r(d(T(X,\alpha))):= \left\lbrace x \in d(T(X,\alpha)): q_{T(X,\alpha)}(x)=0 \in \Q/ 2\mathbb{Z}, \text{ord}(x)=r \right\rbrace.$$
We define the map
\begin{equation}
\label{mu}
\mu: \mathcal{FM}^r(X,\alpha) \rightarrow \text{O}_{\text{Hdg}}(T(X,\alpha)) \setminus I^r(d(T(X,\alpha))),
\end{equation}
where $\text{O}_{\text{Hdg}}(T(X,\alpha))$ is the group of Hodge isometries of the generalized trascendental lattice, in the following way. For every $(X_1,\alpha_1)$ in $\FM^r(X,\alpha)$, there exists a Hodge isometry $g_1: T(X_1,\alpha_1) \cong T(X,\alpha)$. Then 
$$\frac{g_1^{\vee}(T_{X_1})}{T(X,\alpha)} \cong \frac{T_{X_1}}{T(X_1,\alpha_1)} \cong \frac{\Z}{r\mathbb{Z}}$$
is an isotropic, cyclic subgroup of $d(T(X,\alpha))$ of order $r$. Thus, for every class $[(X_1,\alpha_1)]$ in $\mathcal{FM}^r(X,\alpha)$, we set
$$\mu([(X_1,\alpha_1)])=x:=[g_1(\alpha^{-1}_1(\bar{1}))] \in \text{O}_{\text{Hdg}}(T(X,\alpha)) \setminus I^r(d(T(X,\alpha))).$$
We have that:
\begin{enumerate}
\item The map $\mu$ is well-defined and injective (see \cite{Ma}, Lemma 3.2);
\item The image of $\mu$ is contained in $\text{O}_{\text{Hdg}}(T(X,\alpha)) \setminus J^r(d(T(X,\alpha)))$, where
$$J^r(d(T(X,\alpha)))=\left\lbrace x \in I^r(d(T(X,\alpha))): \text{there exists an embedding }U \hookrightarrow \langle N(X,\alpha), \lambda(x) \rangle \right\rbrace,$$ 
for $\lambda: d(T(X,\alpha)) \cong d(N(X,\alpha))$ (see \cite{Ma}, Proposition 3.4).
\end{enumerate}

On the other hand, for every $(X_1, \alpha_1)$ in $\FM^r(X,\alpha)$, we can define a map
\begin{equation}
\label{nu}
\nu: \pi^{-1}(\pi(X_1,\alpha_1)) \rightarrow \Gamma(X_1,\alpha_1)^+ \setminus \text{Emb}(U,N(X_1)),
\end{equation}
where $\text{Emb}(U,N(X_1))$ is the set of the embeddings of $U$ in $N(X_1)=H^0(X_1,\Z) \oplus \text{NS}(X_1) \oplus H^4(X_1,\Z)$ and $\Gamma(X_1,\alpha_1)^+$ is the set of orientation-preserving isometries of $N(X_1)$ \footnote{In general, given a lattice $L$ of signature $(l_+,l_-)$ with $l_+>0$, we can consider the set of oriented positive definite $l_+$-planes in $L \otimes \R$. An orientation for $L$ is the choice of an orientation for such a positive definite $l_+$-plane. For a subgroup $\Gamma$ of $\text{O}(L)$, we denote by $\Gamma^+$ the subgroup of isometries of $\Gamma$ which preserve the given orientation (see \cite{Ma}, Section 2.1). 
}, which come from isometries of $T_{X_1}$ fixing $\alpha_1$ (see \cite{Ma}, Section 3.2). We have that:
\begin{enumerate}
\item The map $\nu$ is injective (see \cite{Ma}, Lemma 3.5);
\item The map $\nu$ is surjective if and only if the C\v{a}ld\v{a}raru's Conjecture holds (see \cite{Ma}, Remark 3.7).
\end{enumerate}
We recall the statement of \emph{C\v{a}ld\v{a}raru's Conjecture}, which was proposed for the first time in \cite{Cald}, Conjecture 5.5.5.

\begin{conj}[\cite{Ma}, Question 3.8]
\label{Caldconj}
Let $(X,\alpha)$ be a twisted K3 surface. For each untwisted Fourier-Mukai partner $X'$ of $X$ and each Hodge isometry $g: T_{X'}\cong T_{X}$, the twisted K3 surface $(X', g^*\alpha)$ is a Fourier-Mukai partner of $(X,\alpha)$.
\end{conj}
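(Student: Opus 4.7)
The strategy combines the derived Torelli theorem for twisted K3 surfaces with the structure theory of untwisted Fourier-Mukai partners. By the twisted derived Torelli theorem, it suffices to construct a Hodge isometry $\tilde{H}(X,\alpha,\Z) \cong \tilde{H}(X', g^*\alpha, \Z)$. The plan is to start from the hypothesis that $X'$ is an untwisted Fourier-Mukai partner of $X$, which via Mukai--Orlov yields a Hodge isometry $\Psi\colon \tilde{H}(X',\Z) \cong \tilde{H}(X,\Z)$ of the untwisted Mukai lattices, and to exploit the freedom in choosing $\Psi$ (by post-composition with Hodge auto-isometries of $\tilde{H}(X,\Z)$) to arrange $\Psi|_{T_{X'}} = g$. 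That this is possible in principle follows from Nikulin's lifting theorems applied to the unimodular overlattice $\tilde{\Lambda}$.

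Next I would check whether such a $\Psi$ is automatically a Hodge isometry of the \emph{twisted} Mukai lattices. Choosing a B-field representative $B \in T_X \otimes \Q$ for $\alpha$ (so $g^*B$ is a B-field for $g^*\alpha$), and using that $B\wedge \sigma = (B,\sigma)\omega$ in $H^4(X,\C)$, a direct computation of the twisted $(2,0)$-parts $\C(\sigma + B\wedge \sigma)$ and $\C(\sigma' + g^*B\wedge \sigma')$ reduces the compatibility condition to $\Psi(\omega') = \omega$, where $\omega$ and $\omega'$ denote the point classes. Since $\Psi$ is pinned down by $\Psi|_{T_{X'}} = g$ only up to post-composition with isometries of $N(X)$ that act trivially on the discriminant form $q_{N(X)}$, the remaining task is to find such a modification sending $\Psi(\omega')$ into the standard hyperbolic plane $H^0(X,\Z) \oplus H^4(X,\Z) \subset N(X)$.

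The main obstacle lies here: the isotropic class $\Psi(\omega')$ sits inside some primitive copy of $U$ in $N(X)$, and one would need to show that all such primitive embeddings are related by stable isometries. This is precisely the content of the surjectivity of the map $\nu$ in \eqref{nu}, which the excerpt records as equivalent to the conjecture itself, so the argument cannot be closed by lattice-theoretic means alone. An alternative approach is moduli-theoretic: one realizes the candidate twisted partner as a moduli space of $\alpha$-twisted sheaves on $X$ with Mukai vector corresponding under $\Psi$ to the class of a skyscraper sheaf on $X'$, and then verifies that this moduli space is isomorphic to $X'$ and carries Brauer class exactly $g^*\alpha$. In either route, the difficulty is global --- one must track the interaction between the B-field twist and the choice of hyperbolic plane inside $N(X)$, and it is this compatibility that has kept the conjecture open in full generality.
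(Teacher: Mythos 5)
This statement is a \emph{conjecture} (C\v{a}ld\v{a}raru's Conjecture, recorded from \cite{Ma}, Question 3.8), and the paper offers no proof of it in general; there is therefore no ``paper's own proof'' to measure you against. Your write-up is honest on this point: you correctly locate the genuine obstruction, namely that after normalizing the untwisted Mukai-lattice isometry $\Psi$ so that $\Psi|_{T_{X'}}=g$, one must move the isotropic class $\Psi(\omega')$ into the standard hyperbolic plane of $N(X)$ by a stable isometry, and this is exactly the surjectivity of the map $\nu$ of \eqref{nu}, which the paper itself records (via \cite{Ma}, Remark 3.7) as \emph{equivalent} to the conjecture. So your ``proof'' is, as you acknowledge, a reduction of the conjecture to itself, not a proof --- which is the correct state of affairs.

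What is worth comparing is the special case the paper actually needs and proves, inside Proposition \ref{thmcasogen} (and the remark following it): when $(X,\alpha)$ has an associated cubic fourfold, the relevant discriminant group $d(T(X,\alpha))$ is cyclic (or at any rate small enough that Nikulin's Theorems 1.14.1/1.14.4 apply), and the paper's route is different from yours. Rather than starting from the untwisted isometry $\Psi$ of $\tilde{H}(X',\Z)\cong\tilde{H}(X,\Z)$ and attempting to normalize $\Psi(\omega')$, the paper restricts $g$ to a Hodge isometry $T(X',g^*\alpha)\cong T(X,\alpha)$ of the \emph{twisted} transcendental lattices and extends that directly to a Hodge isometry of the twisted Mukai lattices by Nikulin's extension theorem; it then corrects the orientation using \cite{Huy}, Lemma 2.3, and invokes \cite{HuySt}, Theorem 0.1 to produce the derived equivalence. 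This sidesteps the $\nu$-surjectivity obstruction entirely in that regime, because the extension of the isometry to the unimodular overlattice is unconditional there. If you want your argument to yield something usable in the context of this paper, that is the special case to aim for; in full generality your assessment that the lattice-theoretic approach cannot be closed is accurate.
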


\begin{rmk}
We point out that Conjecture \ref{Caldconj} is related to an other conjecture due to C\v{a}ld\v{a}raru, which asks whether two twisted K3 surfaces having Hodge isometric twisted trascendental lattices are Fourier-Mukai partners. This conjecture is known to be false in general by \cite{HuySt0}, Example 4.11.
\end{rmk}

To state Ma's formula, we need to introduce some notation. For every $x$ in $I^r(d(T(X,\alpha)))$, we define the overlattice 
$$T_x:=\langle x,T(X,\alpha) \rangle$$
of $T(X,\alpha)$ and the morphism
$$\alpha_x : T_x \twoheadrightarrow \frac{T_x}{T(X,\alpha)} \cong \langle x \rangle \cong \frac{\Z}{r\mathbb{Z}}.$$
For a pair $(x,M)$ such that 
$$\langle \lambda(x), N(X,\alpha) \rangle \cong U \oplus M,$$ 
we define the number 
$$\tau(x,M):= \# (\text{O}_{\text{Hdg}}(T_x,\alpha_x) \setminus \text{O}(d(M)) / \text{O}(M)),$$
where $\text{O}_{\text{Hdg}}(T_x,\alpha_x)$ is the set of Hodge isometries $g$ of $T_x$, such that $g^*\alpha_x=\alpha_x$.
For a natural number $r$, we define
$$ \varepsilon(r)=\begin{cases} 1, & \mbox{if } r=1,2 \\ 2, & \mbox{if } r \geq 3.
\end{cases}$$
Finally, if $\mathcal{G}(L)$ is the genus of a lattice $L$, $\text{O}(L)_0$ is the kernel of the map $r_L: \text{O}(L) \rightarrow \text{O}(d(L))$ and $\text{O}(L)_0^+$ is the subgroup of $\text{O}(L)_0$ of orientation-preserving isometries, we define the subsets
$$\mathcal{G}_1(L):=\lbrace L' \in \mathcal{G}(L): \text{O}(L')_0^+\neq\text{O}(L')_0 \rbrace, \quad \mathcal{G}_2(L):=\lbrace L' \in \mathcal{G}(L): \text{O}(L')_0^+=\text{O}(L')_0 \rbrace.$$
Using the previous observations, Ma proved that the following inequality holds.

\begin{thm}[\cite{Ma}, Proposition 4.3]
\label{Maformula}
We have the inequality
\begin{equation}
\label{CF}
\#\emph{FM}^r(X,\alpha) \leq \sum_x \left\lbrace \sum_M \tau(x,M) + \varepsilon(r) \sum_{M'} \tau(x,M') \right\rbrace.
\end{equation}  
Here:
\begin{itemize}
\item $x$ runs over the set $\emph{O}_{\text{Hdg}}(T(X,\alpha)) \setminus J^r(d(T(X,\alpha)))$;
\item the lattices $M$ and $M'$ run over the sets $\mathcal{G}_1(M_{\varphi})$, $\mathcal{G}_2(M_{\varphi})$ respectively, where $M_{\varphi}$ is a lattice satisfying $\langle \lambda(x), N(X,\alpha) \rangle \cong U \oplus M_{\varphi}$. 
\end{itemize} 
\end{thm}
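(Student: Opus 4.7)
The plan is to stratify $\FM^r(X,\alpha)$ using the two maps $\mu$ and $\nu$ of \eqref{mu} and \eqref{nu}. Starting from the tautological decomposition
\begin{equation*}
\#\FM^r(X,\alpha)\;=\;\sum_{[y]\in\mathcal{FM}^r(X,\alpha)}\#\pi^{-1}([y]),
\end{equation*}
I first use the injectivity of $\mu$, together with the containment $\mathrm{Im}(\mu)\subset \mathrm{O}_{\mathrm{Hdg}}(T(X,\alpha))\setminus J^r(d(T(X,\alpha)))$, to re-index the outer sum by elements $x$ of this double coset, adopting the convention that classes outside $\mathrm{Im}(\mu)$ contribute zero. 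The injectivity of $\nu$ then bounds each inner term $\#\pi^{-1}([(X_1,\alpha_1)])$ by $\#(\Gamma(X_1,\alpha_1)^+\setminus \mathrm{Emb}(U,N(X_1)))$ for any chosen representative $(X_1,\alpha_1)$ with $\mu([(X_1,\alpha_1)])=x$. This reduces the problem to counting orbits of primitive embeddings $\varphi:U\hookrightarrow N(X_1)$.

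Next I convert this embedding count into a sum of terms $\tau(x,L')$. Given $x$, the overlattice $T_x=\langle x,T(X,\alpha)\rangle$ is fixed, so $T_{X_1}\cong T_x$ and $N(X_1)$ is determined, up to its genus, by the standard gluing description of the orthogonal decomposition of $\tilde{\Lambda}$. In particular, for any primitive embedding $\varphi$ the orthogonal complement $\varphi(U)^\perp\cap N(X_1)$ lies in the genus of the fixed lattice characterized by $\langle\lambda(x),N(X,\alpha)\rangle\cong U\oplus M_\varphi$. Stratifying embeddings by the isomorphism class $L'\in\mathcal{G}(M_\varphi)$ of this complement and applying \cite{Ni}, Proposition 1.15.1, I identify the set of primitive embeddings with a fixed complement $L'$, modulo the restriction of $\alpha_1$-fixing Hodge isometries of $T_{X_1}$ and the natural action of $\mathrm{O}(L')$, with the double coset $\tau(x,L')=\#(\mathrm{O}_{\mathrm{Hdg}}(T_x,\alpha_x)\setminus\mathrm{O}(d(L'))/\mathrm{O}(L'))$.

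The final step is the orientation analysis that produces the factor $\varepsilon(r)$. For $L'\in\mathcal{G}_1(M_\varphi)$ there exist orientation-reversing elements of $\mathrm{O}(L')_0$, which combine with orientation-reversing isometries of the $U$-factor to extend to orientation-preserving isometries of $N(X_1)$; hence the orientation constraint built into $\Gamma(X_1,\alpha_1)^+$ is absorbed on the $L'$-side and each genus class contributes exactly $\tau(x,L')$. For $L'\in\mathcal{G}_2(M_\varphi)$ this absorption fails, so the only way to identify two orientation-preserving embeddings is via an orientation-reversing Hodge isometry of $T_x$; but such an isometry sends $\alpha_x$ to $\alpha_x^{-1}$ and therefore lies in $\mathrm{O}_{\mathrm{Hdg}}(T_x,\alpha_x)$ only when $\alpha_x^{-1}=\alpha_x$, i.e.\ when $r\in\{1,2\}$. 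Encoding this dichotomy produces exactly the factor $\varepsilon(r)\in\{1,2\}$ in front of the $\mathcal{G}_2$-sum, and summing over $x$, over $M\in\mathcal{G}_1(M_\varphi)$ and over $M'\in\mathcal{G}_2(M_\varphi)$ yields the stated inequality.

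The main obstacle I anticipate is precisely this orientation bookkeeping: one must verify rigorously that for $L'\in\mathcal{G}_1$ every orientation obstruction on $N(X_1)$ can be cancelled using elements of $\mathrm{O}(L')_0$, and that for $L'\in\mathcal{G}_2$ the residual ambiguity reduces cleanly to the $\alpha_x\leftrightarrow \alpha_x^{-1}$ dichotomy on the $T_x$-side. The fact that the result is only an inequality, not an equality, reflects that $\nu$ is injective but not known to be surjective---its surjectivity would follow from C\v{a}ld\v{a}raru's Conjecture \ref{Caldconj}---so the number of genuine twisted Fourier--Mukai partners can a priori be strictly smaller than the lattice-theoretic upper bound on the right-hand side of \eqref{CF}.
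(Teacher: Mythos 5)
This statement is imported verbatim from Ma (\cite{Ma}, Proposition~4.3) and the paper offers no proof of it --- Section~2.5 only recalls the ingredients ($\mu$, $\nu$, their injectivity, $J^r$, $\tau$, $\mathcal{G}_1$, $\mathcal{G}_2$) --- so there is nothing internal to compare against; your reconstruction follows exactly the architecture those ingredients are designed for, i.e.\ Ma's own argument: decompose $\#\FM^r$ over $\pi$-fibres, transport the outer index through the injective $\mu$, bound each fibre through the injective $\nu$, and stratify primitive embeddings of $U$ by the genus class of the complement via Nikulin. The one point where your sketch is imprecise is the mechanism behind $\varepsilon(r)$: an ``orientation-reversing Hodge isometry of $T_x$'' is generically unavailable, since $\mathrm{O}_{\mathrm{Hdg}}(T_x)=\{\pm\mathrm{id}\}$ and $-\mathrm{id}$ \emph{preserves} the orientation of the positive $2$-plane of $T_x$ while negating $\alpha_x$; the actual identification for $r\le 2$ comes from pairing $-\mathrm{id}_{T}$ (admissible precisely when $2\alpha_x=0$) with an orientation-reversing isometry on the $N(X_1)$-side so that the pair glues to an isometry of the full Mukai lattice, which is the bookkeeping you yourself flag as the delicate step --- but this does not change the shape or validity of the argument.
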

 
\section{Construction of the examples (untwisted case)}

The aim of this section is to prove Theorem \ref{thmmio}. In the first paragraph we make some preliminary remarks, while in the last section we provide the proof of the theorem.

\subsection{Some preliminary computations}  

Let $Y$ be a cubic fourfold in $\mathcal{C}_d$ with $d$ satisfying condition \textbf{(a)} of Theorem \ref{propHass} and $\text{rk}H^{2,2}(Y,\Z)=2$; let us choose a K3 surface $X$ of degree $d$ associated to $Y$. In this section we show that the $m$ non-isomorphic representatives of the isomorphism classes of untwisted Fourier-Mukai partners of $X$ determine $m$ (resp.\ $\lceil m/2 \rceil$) distinct points in the period domain $\mathcal{D}_d$ if $d \equiv 2 (\text{mod}\,6)$ (resp.\ $d \equiv 0 (\text{mod}\,6)$).

We recall that $N(\A_Y)$ has rank $3$, because $Y$ has $\text{rk}H^{2,2}(Y,\Z)=2$ (see \cite{Huy}, Lemma 2.2). Let $\mathsf{v}_Y$ be a generator of the rank one lattice $N(\A_Y) \cap A_2^{\perp}$. Let $m$ be the number of isomorphism classes of Fourier-Mukai partners of $X$. We fix a representative for each class of isomorphism and we denote them by $X_1,\dots,X_m$, choosing $X_1:=X$. By \cite{Or}, Theorem 3.3, this is equivalent to ask that, for every index $2 \leq k \leq m$, there exists a Hodge isometry $\tilde{H}(X,\Z)\cong \tilde{H}(X_k,\Z)$. In particular, the Néron-Severi lattice of $X_k$ has rank one with the polarization class of degree $d$. We denote by $x_k$ the point in the period domain $\mathcal{N}_d^{\prime}$, which is determined by the Hodge structure on $H^2(X_k,\Z)$. These points also descend to different points in the period domain $\mathcal{N}_d$, since they come from non-isomorphic polarized K3 surfaces. 

Composing the isometries of Proposition \ref{propMv} and of Theorem \ref{propHass}, 
we get the isometry of Hodge structures
$$\varphi: T(\A_Y)= \langle \lambda_1, \lambda_2, \mathsf{v}_Y \rangle^{\perp} \cong H^2(X,\Z)_{\text{prim}}=T_X.$$
This induces an isomorphism 
$$j^{\prime}: \mathcal{D}_d^{\prime} \rightarrow \mathcal{N}_d^{\prime}$$
between the local period domains. For every $1 \leq k \leq m$, we denote by $y_k$ the preimage of $x_k$ with respect to $j^{\prime}$. By definition, the point $y_k$ parametrizes a special Hodge structure with labelling of discriminant $d$ on $A_2^{\perp}$. In particular, there exists a class $\mathsf{v}_k$ in $A_2^{\perp}$ with $(\mathsf{v}_k,\mathsf{v}_k)=(\mathsf{v}_Y,\mathsf{v}_Y)$, such that if $T_k=(\Z\mathsf{v}_k)^{\perp}$ in $A_2^{\perp}$, then there is an isometry of Hodge structures $\varphi_k: T_{X_k} \cong T_k$.

By Theorem \ref{Immmodspace}, the isomorphism $j^{\prime}$ descends to an isomorphism
$$j: \mathcal{D}_d^{\text{mar}} \rightarrow \mathcal{N}_d.$$
Thus, the points $y_1,\dots,y_m$ descends to distinct points, which we denote in the same way, in the period domain $\mathcal{D}_d^{\text{mar}}$.

Let us consider their images in the period domain $\mathcal{D}_d^{\text{lab}}$; here, these points could be identified. We observe that, if some of them are not identified in $\mathcal{D}_d^{\text{lab}}$, then they correspond to distinct points in the period domain $\mathcal{D}_d$. Indeed, the map sending $\mathcal{D}_d^{\text{lab}}$ in $\mathcal{D}_d$, which forgets the labelling, is generically an isomorphism.

In particular, it is enough to study the behavior of the forgetful map $\rho: \mathcal{D}_d^{\text{mar}} \rightarrow \mathcal{D}_d^{\text{lab}}$ over the points $y_1,\dots,y_m$, to understand how many of them define different special Hodge structures of discriminant $d$. According to Proposition \ref{markedvslab}, we have to distinguish two cases depending on the value of the discriminant.\\

\noindent \textbf{Case $d \equiv 2 (\text{mod}\,6)$:} by Theorem \ref{Immmodspace}, we have that the map $\rho$ is an isomorphism. Hence, $y_1,\dots,y_m$ are not identified by the action of $\Gamma_d^{+}$ and they determine $m$ distinct special Hodge structures of discriminant $d$. \\
  
\noindent \textbf{Case $d \equiv 0 (\text{mod}\,6)$:} by Theorem \ref{Immmodspace}, the map $\rho$ is a double cover. Hence, our points $y_1,\dots,y_m$ descend to at least $\lceil m/2 \rceil$ different Hodge structures in $\mathcal{D}_d$.

On the other hand, we observe that if $T$ is a sublattice of $\Lambda$ which is Hodge isometric to $T_X$, then the lattice $\gamma(T)$, with the Hodge structure induced by the one on $T$ through $\gamma_{\C}$, satisfies the same property. As a consequence, we obtain that the K3 surface $X_{\gamma(T)}$ with trascendental lattice $\gamma(T)$ is a Fourier-Mukai partner of $X$.  Let $X_T$ be the K3 surface with trascendental lattice $T$ and consider $X_T$ and $X_{\gamma(T)}$ with the unique polarization. Note that $X_T$ and $X_{\gamma(T)}$ are not isomorphic as polarized K3 surfaces, because $\gamma$ is not in $G_d^+ \cong  \Sigma_d^+$ by Remark \ref{rmkgamma} and Theorem \ref{Immmodspace}. Thus, their corresponding period points in $\mathcal{N}_d$ define two distinct period points in $\mathcal{D}_d^{\text{mar}}$, which belong to the same fiber of $\rho$. 

As a consequence, the $m$ points $y_1,\dots,y_m$ determine exactly $\lceil m/2 \rceil$ different special Hodge structures of discriminant $d$.  


\subsection{Proof of Theorem \ref{thmmio}}

Keeping the notation introduced in Section 3.1, we set
\[p:=
\begin{cases}
m & \text{if } d \equiv 2 (\text{mod}\,6) \\
\lceil m/2 \rceil & \text{if } d \equiv 0 (\text{mod}\,6).
\end{cases} 
\]
Firstly, we prove that $p$ is an upper bound to the number of Fourier-Mukai partners of $Y$. Actually, this represents an alternative way to prove the finiteness result of \cite{Huy}, Corollary 3.5, for a cubic fourfold with $H^{2,2}$ of rank $2$ and associated K3 surface.

\begin{prop}
\label{propmia2}
Let $Y$ be a cubic fourfold in $\mathcal{C}_d$ with $d$ satisfying condition \emph{\textbf{(a)}} of Theorem \ref{propHass} and with $\emph{rk}H^{2,2}(Y,\Z)=2$. If the associated K3 surface $X$ admits $m$ non isomorphic Fourier-Mukai partners, then the cubic fourfold $Y$ cannot have more than $m$ (resp.\ $\lceil m/2 \rceil$) Fourier-Mukai partners if $d \equiv 2 (\emph{mod}\,6)$ (resp.\ if $d \equiv 0 (\emph{mod}\,6)$). 
\end{prop}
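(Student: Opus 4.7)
The plan is to reverse the construction of Section 3.2: from each Fourier-Mukai partner $Y'$ of $Y$ I would extract an associated K3 surface lying in $\{X_1,\dots,X_m\}$, and then bound the number of non-isomorphic $Y'$ that can give rise to a prescribed $\gamma$-orbit of associated K3 surfaces.

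For the first step, let $Y'$ be a Fourier-Mukai partner of $Y$. Proposition 3.5 of \cite{Huy}, quoted immediately before Theorem \ref{propcontroversa}, supplies a Hodge isometry $\Phi\colon\tilde H(\A_Y,\Z)\cong\tilde H(\A_{Y'},\Z)$. Since $\Phi$ preserves the algebraic part and the sublattice $A_2$ (unique up to isometry of $\tilde\Lambda$ by the remark after Theorem \ref{AddThom}), together with the self-intersection of the primitive class $\mathsf v_Y$ from Section 3.1, it carries the labelling of discriminant $d$ of $Y$ into a labelling of the same discriminant for $Y'$. Hence $Y'\in\mathcal{C}_d$ is again general, condition \eqref{ExK3A} still holds, and $Y'$ admits an associated K3 surface $X'$ of degree $d$. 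Restricting $\Phi$ to the generalized transcendental lattices and composing with the isometries \eqref{mappavarphi} attached to $Y$ and $Y'$ produces a Hodge isometry $T_X\cong T_{X'}$; by the derived Torelli theorem of Orlov (\cite{Or}, Theorem 3.3, recalled in Section 3.1) the K3 surface $X'$ is then isomorphic to one of $X_1,\dots,X_m$.

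For the second step, I count how many non-isomorphic $Y'$ can give rise to the same associated K3 surface $X_k$. Since $j\colon\mathcal{D}_d^{\mathrm{mar}}\cong\mathcal{N}_d$ is a bijection (Theorem \ref{Immmodspace}), the marked period of $Y'$ is uniquely determined by $X_k$. Since $Y'$ is general its labelling of discriminant $d$ is unique, so the labelled period in $\mathcal{D}_d^{\mathrm{lab}}$ determines the period in $\mathcal{D}_d$, and Torelli for cubic fourfolds then determines $Y'$ up to isomorphism. If $d\equiv 2\,(\mathrm{mod}\,6)$, the forgetful map $\rho\colon\mathcal{D}_d^{\mathrm{mar}}\to\mathcal{D}_d^{\mathrm{lab}}$ is an isomorphism by Proposition \ref{markedvslab}, so $Y'\mapsto X'$ is an injection of $\FM(Y)/\!\cong$ into $\{X_1,\dots,X_m\}$, yielding $|\FM(Y)|\le m$. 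If $d\equiv 0\,(\mathrm{mod}\,6)$, $\rho$ is a double cover and the two preimages of the labelled period of $Y'$ produce, via $j$, two associated K3 surfaces $X^\pm$ interchanged by the involution $\gamma$ of Remark \ref{rmkgamma}; both lie in $\FM(X)$ by Lemma \ref{lemmadequiv0bis} and form a single $\gamma$-orbit. Two cubic fourfolds producing the same orbit share the same labelled period and are hence isomorphic, so $Y'\mapsto\{X^+,X^-\}$ injects $\FM(Y)/\!\cong$ into the set of $\gamma$-orbits on $\{X_1,\dots,X_m\}$; by Lemma \ref{lemmadequiv0} every non-trivial orbit has size two, whence $|\FM(Y)|\le\lceil m/2\rceil$, the ceiling covering the degenerate case $m=1$.

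The main obstacle I anticipate is the first step: carefully verifying that the abstract Mukai-lattice Hodge isometry $\Phi$ genuinely transports the associated K3 structure from $Y$ to $Y'$, so that the induced Hodge isometry $T_X\cong T_{X'}$ is compatible with the K3 polarizations and Orlov's theorem can be applied. Once that compatibility is secured, the rest is essentially a bookkeeping exercise organised by the double cover $\rho$ and the $\gamma$-freeness afforded by Lemma \ref{lemmadequiv0}.
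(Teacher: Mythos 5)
Your proposal is correct and follows essentially the same route as the paper: both use Huybrechts' Proposition 3.5 to pass from a Fourier--Mukai equivalence $\A_Y\simeq\A_{Y'}$ to a Hodge isometry of transcendental lattices, identify the associated K3 surface of $Y'$ with one of the $X_k$ via Orlov's theorem, and then use the isomorphism $j$, the uniqueness of the labelling for general points, Torelli for cubic fourfolds, and (for $d\equiv 0\ (\mathrm{mod}\ 6)$) the double cover $\rho$ together with Lemmas \ref{lemmadequiv0} and \ref{lemmadequiv0bis} to bound the count. The only cosmetic difference is that the paper extracts $X'$ directly from the marked period point $y'$ of $Y'$ via $j$ rather than arguing that the Mukai-lattice isometry transports the labelling, which quietly disposes of the compatibility worry you flag at the end.
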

\begin{proof}
Consider the $p$ distinct points $y_1,\dots ,y_p \in \mathcal{D}_d$ defined in Section 3.1. We claim that $y_k$ belongs to the image of the period map of cubic fourfolds for every $1 \leq k \leq p$. Indeed, we observe that $d$ is not $2$ or $6$, because $d$ satisfies condition \eqref{0}, as $\mathcal{C}_d$ is not empty. Moreover, the point $y_k$ has a unique labelling, as recalled in Section 2.2. It follows that $y_k$ is a period point in the complement of $\mathcal{D}_2 \cup \mathcal{D}_6$. By \cite{Laza}, Theorem 1.1, there exists a cubic fourfold $Y_k$ in $\mathcal{C}_d$ such that $\tau(Y_k)=y_k$, as we wanted.  

Now, let $Y'$ be a Fourier-Mukai partner of $Y$, i.e.\ such that there exists an equivalence $\A_Y \xrightarrow{\sim} \A_{Y'}$ of Fourier-Mukai type. By \cite{Huy}, Proposition 3.4, this induces a Hodge isometry $\tilde{H}(\A_Y,\Z) \cong \tilde{H}(\A_{Y^{'}},\Z)$. Notice that $\text{rk}H^{2,2}(Y',\Z)=2$, as for $Y$. Thus its period point $\tau(Y') \in \mathcal{D}_d$ corresponds to a point in $\mathcal{D}_d^{\text{lab}}$ which we denote in the same way. Let $y' \in \mathcal{D}_d^{\text{mar}}$ be a point in the fiber $\rho^{-1}(\tau(Y'))$. We set $x':=j(y') \in \mathcal{N}_d$. The point $x'$ corresponds to a very general K3 surface $X'$ with unique primitive polarization $l_{X'}$ of degree $d$. In particular, since $\tilde{H}(\A_{Y^{'}},\Z)$ is Hodge isometric to the Mukai lattice $\tilde{H}(X',\Z)$, it follows from \cite{Or}, Theorem 3.3 that $X'$ is a Fourier-Mukai partner of $X$. Thus, there exists an index $k \in \lbrace 1, \dots, m \rbrace$ such that, if $l_{X_k}$ denotes the unique primitive polarization on $X_k$, then $(X',l_{X'}) \cong (X_k,l_{X_k})$ as polarized K3 surfaces. Equivalently, the points $x'$ and $x_k$ are identified in $\mathcal{N}_d$. Since $j$ is an isomorphism, it follows that $y_k=y'$ in $\mathcal{D}_d^{\text{mar}}$. In particular, they represent the same point in $\mathcal{D}_d$: by the Torelli Theorem for cubic fourfolds, we conclude that $Y'$ is isomorphic to $Y_k$. This implies the desired statement.
\end{proof} 

We are ready to prove Theorem \ref{thmmio}, which is formulated in a more precise way using Theorem \ref{exFMk3}. 

\begin{prop}[Theorem \ref{thmmio}]
\label{propmia}
Let $d$ be a positive integer satisfying conditions \emph{(0)} and \emph{\textbf{(a)}}. Then, the number of isomorphism classes of Fourier-Mukai partners for a very general special cubic fourfold in $\mathcal{C}_d$ is
\begin{itemize}
\item $p=2^{h-1}$, if $d \equiv 2 (\emph{mod}\,6)$ and the prime factorization of $d$ has $h>1$ distinct odd primes;
\item $p=2^{h-2}$, if $d \equiv 0 (\emph{mod}\,6)$ and the prime factorization of $d$ has  $h>2$ distinct odd primes;
\item $p=1$, otherwise.
\end{itemize}
\end{prop}
\begin{proof}
Let $Y$ be a very general special cubic fourfold in $\mathcal{C}_d$ as in the statement. We consider the $p$ distinct points $y_1,\dots,y_p$ in $\mathcal{D}_d$ defined in Section 3.1. We claim that there exist $p$ very general special cubic fourfolds $Y_1, \dots, Y_p \in \mathcal{C}_d$ such that $\tau(Y_k)=y_k$ for $k=1,\dots,p$. Indeed, $d$ is not $2$ or $6$, because $d$ satisfies condition \eqref{0}. Moreover, every $y_k$ has a unique labelling, since they are very general. It follows that $y_k$ is a period point in the complement of $\mathcal{D}_2 \cup \mathcal{D}_6$. By \cite{Laza}, Theorem 1.1, we deduce our claim. Note that $Y_1 \cong Y$ and the cubic fourfolds $Y_1,\dots,Y_p$ are not isomorphic to each other by Torelli Theorem for cubic fourfolds. 

By construction, for every $2 \leq k \leq p$, there is an isometry of Hodge structures
\begin{equation*}
\tilde{H}(\A_Y,\Z) \cong \tilde{H}(X,\Z) \cong \tilde{H}(X_k,\Z) \cong \tilde{H}(\A_{Y_k},\Z).
\end{equation*}
By Theorem \ref{thm_Huy}, the existence of such an isometry of Hodge structures implies the existence of a Fourier-Mukai equivalence between $\A_Y$ and $\A_{Y_k}$. On the other hand, by Proposition \ref{propmia2} every other Fourier-Mukai partner of $Y$ is isomorphic to one of those we constructed. Finally, the counting formula of Theorem \ref{exFMk3} implies the statement.
\end{proof}

\begin{rmk}
\label{rmk_extwithBLM+}
As explained in Remark \ref{rmk_BLMS+Huy},  using \cite{BLM+} we can remove the assumption that $Y$ is very general special to Theorem \ref{thmmio} and prove the result for every cubic fourfold with $H^{2,2}(Y,\Z)$ of rank two and associated K3 surface.
\end{rmk}

\begin{ex}
Using Proposition \ref{propmia}, it is easy to find the divisors in $\mathcal{C}$ whose very general element has non trivial Fourier-Mukai partners. For example, take $d=182$, which is $\equiv 2 (\text{mod}\,6)$. By Proposition \ref{propmia} the very general cubic fourfold in $\mathcal{C}_{182}$ has one non isomorphic Fourier-Mukai partner. If $d= 546 \equiv 0 (\text{mod}\,6)$, then the very general element in $\mathcal{C}_{546}$ has one non isomorphic Fourier-Mukai partner.  
\end{ex}

\begin{rmk}
Notice that, to prove these results, we have fixed an associated K3 surface to $Y$ and, consequently, an isomorphism between the period domains $\mathcal{D}_d^{\text{mar}}$ and $\mathcal{N}_d$. Actually, we could choose a Fourier-Mukai partner of $X$ as fixed associated K3 surface to $Y$: this would have given a different isomorphism $\tilde{j}$ on the level of period domains and a different identification of Fourier-Mukai partners of $Y$ with Fourier-Mukai partners of $X$ (see \cite{Hass2}, Remark 27). 
\end{rmk}

\section{Construction of the examples (twisted case)}

This section is devoted to the proof of Theorem \ref{propmiatwist}. In particular, in Section 4.2 and 4.3 we explicit the lower bound to the number of Fourier-Mukai partners of a cubic fourfold $Y$ as in Theorem \ref{propmiatwist}, in terms of the number of primes in the prime factorization of the discriminant of $Y$ and the Euler function evaluated in the order of the Brauer class of the associated twisted K3 surface.  

\subsection{Proof of Theorem \ref{propmiatwist}}

Let $Y$ be a very general special cubic fourfold in $\mathcal{C}_d$ such that condition \textbf{(a')} of Theorem \ref{exK3twist} holds. If $d$ satisfies in addition \textbf{(a)}, then we fix an associated untwisted K3 surface and the following construction provides the same period points constructed in Section 3. In the general case, the cubic fourfold $Y$ has a twisted associated K3 surface, which we denote by $(X,\alpha)$ with $\alpha$ of order $\kappa$. Assume also that
\begin{equation}\tag{\textbf{b}}
9 \text{ does not divide the discriminant } d.
\end{equation}
Condition \textbf{(b)} implies that the discriminant group of $T(\A_Y)$ and, consequently, also that of $T(X,\alpha)$, are cyclic, by Proposition \ref{propMv} and Proposition \ref{K_dperp}. As a consequence, by \cite{Ni}, Theorem 1.14.4, the natural embedding
\begin{equation}
\label{eq_T}
T(X,\alpha) \hookrightarrow \tilde{H}(X,\alpha,\Z) \cong \tilde{\Lambda}
\end{equation}
is unique up to isometry of $\tilde{\Lambda}$, because $\text{rk}(N(X,\alpha)) \geq l(d(T(X,\alpha)))+2=3$ \footnote{Assumption \textbf{(b)} is used in the proof of Lemma \ref{lemmafondam}, to lift an isometry of the twisted trascendental lattices acting trivially on the discriminant group to the trascendental lattices of two twisted K3 surfaces. Also  it is used in the proof of Proposition \ref{thmcasogen}, to lift an isometry of the twisted trascendental lattices to the Mukai lattices of two twisted K3 surfaces.}.
   
Now, let $(X',\alpha')$ be a twisted Fourier-Mukai partner of $(X,\alpha)$ of the same order $\kappa$. By \cite{HuySt0}, Proposition 4.3, there is an isometry of Hodge structures $\tilde{H}(X,\alpha,\Z) \cong \tilde{H}(X',\alpha',\Z)$ preserving the orientation. Thus the choice of $(X',\alpha')$ determines a Hodge structure on the abstract lattice $\tilde{\Lambda}$ which is Hodge isometric to $\tilde{H}(\A_X,\Z)$. Roughly speaking, in order to produce a Fourier-Mukai partner of $Y$ out of $(X',\alpha')$, we need to check whether the two Hodge structures on $\tilde{H}(\A_X,\Z)$, induced by $(X,\alpha)$ and $(X',\alpha')$ respectively, are not Hodge isometric on $A_2^{\perp}$.

To explain better the previous sentence, we need to introduce the following notation. Let $y'$ be the period point in the quadric $Q$ defined in \eqref{quadric} parametrizing the Hodge structure on $\tilde{H}(\A_X,\Z)$ induced by $(X',\alpha')$. Up to exchanging $\tilde{H}^{2,0}(X',\alpha')$ with $\tilde{H}^{0,2}(X',\alpha')$, we can assume that $y'$ is in $\mathcal{D}_d'$.  Recall that the image of $y'$ in $\mathcal{D}_d$ (which we still denote by $y'$) is equal to the period point $y:=\tau(Y)$ if and only if there is an isometry of $K_d^{\perp}$ which extends to an isometry of $L^0$. In this case, we would have that the two Hodge structures on $K_d^{\perp}$ given by those on $\tilde{H}(X,\alpha,\Z)$ and $\tilde{H}(X',\alpha',\Z)$, respectively, induce the same Hodge structure on $L^0$, or equivalently on $A_2^{\perp}$. 

As in the untwisted case, it is convenient to consider firstly the period domain $\mathcal{D}_d^{\text{mar}}$. Here, the points $y$ and $y'$ are identified if and only if they are in the same orbit by the action of $G_d^+$. We recall that elements in $G_d^+$ are isometries of $K_d^{\perp}$ acting trivially on the discriminant group $d(K_d^{\perp})$.
   
Assume that $(X',\alpha')$ is not isomorphic to $(X,\alpha)$. In the next lemma, we prove that $y$ and $y'$ are distinct in the period domain $\mathcal{D}_d^{\text{mar}}$ under this assumption. 


\begin{lemma}
\label{lemmafondam}
The period points $y$ and $y'$ are distinct in $\mathcal{D}_d^{\emph{mar}}$.
\end{lemma} 
\begin{proof}
We will actually prove that if $y=y'$ in $\mathcal{D}_d^{\text{mar}}$, then the twisted K3 surfaces $(X,\alpha)$ and $(X',\alpha')$ are isomorphic, in contradiction with our assumption.

If $y$ and $y'$ are the same point in the period domain $\mathcal{D}_d^{\text{mar}}$, then there exists an isometry of Hodge structures
$$\eta: T(X,\alpha) \cong T(X',\alpha'),$$
such that the induced isomorphism $\bar{\eta}$ between the discriminant groups $d(T(X,\alpha))$ and $d(T(X',\alpha'))$ is trivial. 

First of all, we prove that the Hodge isometry $\eta$ extends to a Hodge isometry $g$ of the trascendental lattices $T_X$ and $T_{X'}$. Indeed, consider the natural (non primitive) embeddings $i: T(X,\alpha) \hookrightarrow  T_X$ and $i': T(X',\alpha') \hookrightarrow T_{X'}$.  We set
$$H=\frac{T_X}{T(X,\alpha)} \quad \text{and} \quad H'=\frac{T_{X'}}{T(X',\alpha')},$$
which are cyclic subgroups of $\Z/d\Z$ of order $\kappa$. Thus, $H$ and $H'$ are the same subgroup, because they have the same order. If $\bar{\eta}$ denotes the automorphism of $\Z/d\Z$ induced by $\eta$, then
$$\bar{\eta}(H)=\text{id}(H)=H$$
by assumption. By \cite{Ni}, Proposition 1.4.2, we conclude that the isometry $\eta$ extends to an isometry $g: T_X \cong T_{X'}$. By construction, the isometry $g$ preserves the Hodge structures on $T_X$ and $T_{X'}$. 

Secondly, it is easy to check that, if the isomorphism $\bar{\eta}$ acts as the identity on $\Z/d\Z$, then also $\bar{g}$, induced by $g$, acts trivially on the discriminant groups.

Finally, we denote by $\Z l$ the rank one lattice which is the orthogonal complement of $T_X$ in $H^2(X,\Z) \cong \Lambda$. Since $d(T_X) \cong d(\Z l)$, by \cite{Ni}, Proposition 1.5.2 we conclude that the isometry $g$ extends to an isometry $f_{\Lambda}$ of $\Lambda$ and, therefore, the isometry $g$ extends to $f: H^2(X,\Z) \cong H^2(X',\Z)$. Furthermore, the restriction of $f_{\Lambda}$ to $\Z l$ is the identity, because by construction it induces the identity on the discriminant group of $\Z l$. In particular, we deduce that the isometry $f$ preserves the ample cones of $X$ and $X'$. By Torelli Theorem, there exists an isomorphism $F$ between the K3 surfaces $X'$ and $X$ such that $F^*=f$. Since, by definition, the isometry $f$ sends the class $\alpha$ to $\alpha'$, we conclude that $(X,\alpha)$ and $(X',\alpha')$ are isomorphic as twisted K3 surfaces, in contradiction with our assumption. Therefore, we conclude that $y$ and $y'$ are not the same point in $\mathcal{D}_d^{\text{mar}}$, as we wanted. 
\end{proof}

\begin{proof}[Proof of Theorem \ref{propmiatwist}]
The representatives of the $m'$ isomorphism classes of twisted Fourier-Mukai partners of order $\kappa$ of $(X,\alpha)$ determine $m'$ distinct period points $y_k \in \mathcal{D}^{\text{mar}}_d$ by Lemma \ref{lemmafondam}. Arguing as in the untwisted case, the proof follows from Proposition \ref{markedvslab}, Theorem 1.1 of \cite{Laza} and Remark \ref{rmk_BLMS+Huy}.  
\end{proof}

\begin{rmk}
\label{rmk_extwithBLM+twist}
As explained in Remark \ref{rmk_BLMS+Huy},  using \cite{BLM+} we can remove the assumption that $Y$ is very general special to Theorem \ref{propmiatwist}, which holds for every cubic fourfold with $H^{2,2}(Y,\Z)$ of rank $2$ in the fixed divisor $\mathcal{C}_d$.
\end{rmk}

\begin{rmk}
\label{rmkonperioddom}
Notice that it is necessary to assume that $\text{ord}(\alpha)=\text{ord}(\alpha')$, in order to extend the isometry $\eta$ to the trascendental lattices. Indeed, if this condition is not satisfied, then the discriminant groups of $T_X$ and $T_{X'}$ could not be isomorphic. Actually, we can prove that Lemma \ref{lemmafondam} does not hold in general without this assumption, by giving a counterexample in the untwisted case. 

We set $d=2 \cdot 13^2$, which is congruent to $2$ modulo $6$ and let $Y$ be a very general cubic fourfold in $\mathcal{C}_d$. Since $d$ satisfies condition \textbf{(a)}, there exists a K3 surface $X$, which is associated to $Y$. By the counting formula of Theorem \ref{exFMk3}, the K3 surface $X$ admits $2^0=1$ isomorphism class of Fourier-Mukai partners. On the other hand, by \cite{Ma}, Proposition 5.1, there exist $\varphi(13) \cdot 2^{-1}=6$ isomorphism classes of Fourier-Mukai partners of order $13$ of $X$. We denote by $(X',\alpha')$ one of them. Assume that there is a cubic fourfold $Y' \in \mathcal{C}_d$ such that $\tilde{H}(\A_{Y'},\Z) \cong \tilde{H}(X',\alpha',\Z)$. 
By Remark \ref{rmk_BLMS+Huy}, $Y'$ is a Fourier-Mukai partner of $Y$. On the other hand, by the counting formula of Theorem \ref{propmia}, every Fourier-Mukai partner of $Y$ is isomorphic to $Y$; it follows that $Y \cong Y'$. On the other hand, the K3 surfaces $X$ and $(X',\alpha')$ cannot clearly be isomorphic.  

This prevents us to have a well-defined map between $\mathcal{D}_d^{\text{mar}}$ and the period domain of generalized Calabi-Yau structures of hyperk\"ahler type (see \cite{Huyv} for the definition), and to generalize Theorem 5.3.2 and 5.3.3 of \cite{Hass} to the twisted case.
\end{rmk}

\subsection{Ma's formula in our setting}
The aim of this paragraph is to prove that if we consider a very general cubic fourfold $Y$ in $\mathcal{C}_d$ satisfying condition $\textbf{(a')}$ and $\textbf{(b)}$, then formula \eqref{CF} gives precisely the number of elements in the set $\FM^{r}(X,\alpha)$, where $(X,\alpha)$ is a twisted K3 surface associated to $Y$. The key point of the proof is the fact that the C\v{a}ld\v{a}raru Conjecture \ref{Caldconj} holds in this particular case.
  
\begin{prop}
\label{thmcasogen}
Let $(X,\alpha)$ be a twisted K3 surface such that there exist a special cubic fourfold $Y$ of discriminant $d$ and a Hodge isometry $\tilde{H}(X,\alpha,\Z) \cong \tilde{H}(\A_Y,\Z)$. If $X$ has $\text{rk}(H^{1,1}(X,\Z))=1$, and $9 \nmid d$, then the number of (isomorphism classes of) Fourier-Mukai partners of $(X,\alpha)$ of order $r$ is given by formula \eqref{CF}.
\end{prop}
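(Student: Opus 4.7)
The plan is to turn Ma's inequality \eqref{CF} into an equality by showing that, in our setting, both injective maps $\mu$ from \eqref{mu} and $\nu$ from \eqref{nu} are in fact bijective onto the targets appearing in \eqref{CF}. Once this is established, the fiber-by-fiber bookkeeping that produces Ma's bound in Theorem \ref{Maformula} becomes an exact count.

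The first and main step is to verify that C\v{a}ld\v{a}raru's Conjecture \ref{Caldconj} holds for $(X,\alpha)$, which by \cite{Ma}, Remark 3.7, is equivalent to the surjectivity of $\nu$. By the Hodge isometry $T(X,\alpha) \cong T(\A_Y) \cong K_d^{\perp}$ coming from \eqref{isoAT} and \eqref{mappaphi_T}, together with Proposition \ref{K_dperp} and the hypothesis $9 \nmid d$, the discriminant group $d(T(X,\alpha))$ is cyclic. Given any untwisted Fourier-Mukai partner $X'$ of $X$ and any Hodge isometry $g : T_{X'} \cong T_X$, I would argue that $g^{-1}$ extends to a Hodge isometry $\tilde{H}(X,\alpha,\Z) \cong \tilde{H}(X', g^{\ast}\alpha, \Z)$ of the full Mukai lattices: cyclicity of the discriminant together with \cite{Ni}, Theorem 1.14.4, yields uniqueness of the primitive embedding of the transcendental lattice into the unimodular lattice $\tilde{\Lambda}$, and a diagram chase on discriminant groups identical in spirit to the one carried out in the proof of Lemma \ref{lemmafondam} promotes $g$ to such an extension. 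The twisted derived Torelli theorem then furnishes a Fourier-Mukai equivalence $\D(X,\alpha) \simeq \D(X', g^{\ast}\alpha)$.

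The second step is to show that $\mu$ surjects onto $\text{O}_{\text{Hdg}}(T(X,\alpha)) \setminus J^r(d(T(X,\alpha)))$. Given a class $[x]$ there, the overlattice $T_x = \langle x, T(X,\alpha)\rangle$ inherits a weight-two Hodge structure from $T(X,\alpha)$, and the defining condition of $J^r$ provides a primitive embedding $U \hookrightarrow \langle N(X,\alpha),\lambda(x)\rangle$. Transporting this data through the Hodge isometry $\tilde{H}(X,\alpha,\Z) \cong \tilde{H}(\A_Y,\Z)$ produces a point of the local period domain $\mathcal{D}_d'$; by \cite{Lo} this point corresponds to a cubic fourfold $Y_x \in \mathcal{C}_d$, and Theorem \ref{exK3twist} attaches to $Y_x$ a twisted K3 surface $(X_1,\alpha_1)$ whose Mukai lattice realizes the prescribed Hodge structure. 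By construction $\mu([(X_1,\alpha_1)]) = [x]$, so $\mu$ is surjective.

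Combining the bijectivity of $\mu$ and $\nu$ with the inner counting in \cite{Ma} converts \eqref{CF} into an equality, which is exactly the content of the proposition. The main obstacle is the first step: a Hodge isometry of generalized transcendental lattices does not in general extend to one of the full Mukai lattices in the twisted setting, and C\v{a}ld\v{a}raru's conjecture is known to fail in general (cf.\ the discussion after Conjecture \ref{Caldconj}). It is precisely the hypothesis $9 \nmid d$, forcing $d(T(X,\alpha))$ to be cyclic, which allows Nikulin's uniqueness of primitive embeddings to overcome this obstruction.
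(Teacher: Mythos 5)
Your first step is essentially the paper's: cyclicity of $d(T(X,\alpha))$ (via \eqref{isoAT}, \eqref{mappaphi_T}, Proposition \ref{K_dperp} and $9\nmid d$) lets Nikulin's extension theorem promote a Hodge isometry of (twisted) transcendental lattices to one of the full Mukai lattices, and then the Huybrechts--Stellari twisted derived Torelli theorem produces the equivalence; this verifies C\v{a}ld\v{a}raru's Conjecture in this setting and hence the bijectivity of $\nu$. (You should still record the orientation issue: the paper composes with the orientation-reversing Hodge isometry of \cite{Huy}, Lemma 2.3 before invoking \cite{HuySt}, Theorem 0.1.)

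The gap is in your second step, the surjectivity of $\mu$. A class $[x]$ in $\text{O}_{\text{Hdg}}(T(X,\alpha))\setminus J^r(d(T(X,\alpha)))$ does not alter the weight-two Hodge structure on $\tilde\Lambda$: the $(2,0)$-part is unchanged, so ``transporting this data'' to $\mathcal{D}_d'$ just returns the period point of $Y$ itself, and the cubic fourfold $Y_x$ you obtain from \cite{Lo} is $Y$ again. Theorem \ref{exK3twist} is then a bare existence statement --- it attaches \emph{some} twisted K3 surface to $Y_x$, with no control over which isotropic subgroup of $d(T(X,\alpha))$, equivalently which overlattice $T_x\supset T(X,\alpha)$ and which primitive embedding of $U$ into the algebraic part, it realizes. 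So your final claim ``by construction $\mu([(X_1,\alpha_1)])=[x]$'' is not justified; for all the argument shows, $(X_1,\alpha_1)$ could be $(X,\alpha)$ itself. The paper avoids this by constructing the partner directly on the K3 side: it embeds $\tilde M_x\oplus T_x$ primitively into $\tilde\Lambda$, sets $\Lambda_\varphi:=\varphi(U)^\perp\cap\tilde\Lambda\cong\Lambda$ with the Hodge structure induced from $T_x$, invokes the surjectivity of the period map for K3 surfaces to produce $X_\varphi$ with $H^2(X_\varphi,\Z)\cong\Lambda_\varphi$, and defines $\alpha_\varphi=\alpha_x\circ(h|_{T_{X_\varphi}})^{-1}$; this is what pins down $\mu([(X_\varphi,\alpha_\varphi)])=[x]$. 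One then still has to extend the resulting isometry of twisted transcendental lattices to the Mukai lattices (cyclicity again) and check that $\text{O}_{\text{Hdg}}$-equivalent choices of $x$ give $\sim$-equivalent partners, which your write-up also omits.
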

\begin{proof}
Firstly, we observe that the C\v{a}ld\v{a}raru's Conjecture \ref{Caldconj} holds under our assumptions for every Fourier-Mukai partner $(X_1,\alpha_1)$ of $(X,\alpha)$. More precisely, we prove that if a K3 surface $X_1^{\prime}$ has the trascendental lattice $T_{X_1^{\prime}}$ Hodge isometric to $T_{X_1}$ via $g_1$, then the twisted K3 surface $(X_1^{\prime},\alpha_1^{\prime}:=g_1^{*}\alpha_1)$ is a Fourier-Mukai partner of $(X_1,\alpha_1)$. Indeed, the isometry $g_1$ restricts to the isometry of Hodge structures 
$$f:= (g_1)|_{T(X_1',\alpha_1')}: T(X_1^{\prime},\alpha_1^{\prime}) \cong T(X_1,\alpha_1).$$
Notice that there exists a Hodge isometry $T(X,\alpha) \cong T(X_1,\alpha_1)$; therefore, the discriminant group $d(T(X_1,\alpha_1))$ is cyclic. Thus, by \cite{Ni}, Theorem 1.14.4, the isometry $f$ extends to an isometry of Hodge structures
$$\phi_1: \tilde{H}(X_1^{\prime},\alpha_1^{\prime},\Z) \cong \tilde{H}(X_1,\alpha_1,\Z).$$ 
By \cite{Huy}, Lemma 2.3, we know that every Hodge structure on $\tilde{\Lambda}$ determined by a point in $\mathcal{D}'$ admits a Hodge isometry that reverses any given orientation of the four positive directions. As a consequence, up to composing with this isometry, we can assume that $\phi_1$ is orientation-preserving: by \cite{HuySt}, Theorem 0.1, we conclude that there exists an equivalence of categories $\D(X_1^{\prime},\alpha_1^{\prime}) \xrightarrow{\sim} \D(X_1,\alpha_1)$. In particular, we obtain that the map $\nu$ of \eqref{nu} is bijective. 

To conclude the proof, we show that the map $\mu$ of \eqref{mu} has image $\text{O}_{\text{Hdg}}(T(X,\alpha)) \setminus J^r(d(T(X,\alpha)))$; in particular, this implies that we have an equality in formula \eqref{CF}.

Let $x$ be in $J^r(d(T(X,\alpha)))$; by definition, $x$ is an element of $I^r(d(T(X,\alpha)))$ such that there exists an embedding
$$\varphi: U \rightarrow \tilde{M_x},$$ 
where 
$$\tilde{M_x}:=\langle \lambda(x), N(X,\alpha) \rangle \subset N(X,\alpha)^{\vee}$$
is an overlattice of $N(X,\alpha)$. By \cite{Ni}, Proposition 1.4.1, we have 
$$d(\tilde{M_x}) \cong \langle \lambda(x) \rangle^{\perp}/ \langle \lambda(x) \rangle \cong \langle x \rangle^{\perp}/ \langle x \rangle \cong d(T_x).$$
Thus, by \cite{Ni}, Proposition 1.6.1, we have an embedding $\tilde{M_x} \oplus T_x \hookrightarrow \tilde{\Lambda}$, with $\tilde{M_x}$ and $T_x$ both embedded primitively. We define the lattice
$$\Lambda_{\varphi}:= \varphi(U)^{\perp} \cap \tilde{\Lambda},$$ 
which is isometric to the K3 lattice $\Lambda$, with the Hodge structure induced from $T_x$. By the surjectivity of the period map, there exist a K3 surface $X_{\varphi}$ and a Hodge isometry
$$h: H^2(X_{\varphi},\Z) \cong \Lambda_{\varphi}.$$
We denote by $\alpha_{\varphi}$ the composition $\alpha_x \circ h|_{T_{X_{\varphi}}}$; then, we obtain a twisted K3 surface $(X_{\varphi}, \alpha_{\varphi})$. 

Now, we observe that the map $h$ induces the isometry
$$f: T(X_{\varphi},\alpha_{\varphi})= \ker \alpha_{\varphi} \cong \ker \alpha_x=T(X,\alpha).$$
Moreover, since $d(T(X,\alpha))$ is a cyclic group, applying \cite{Ni}, Theorem 1.14.4, we conclude that $f$ extends to a Hodge isometry
$$\tilde{f}: \tilde{H}(X_{\varphi},\alpha_{\varphi},\Z) \cong \tilde{H}(X,\alpha,\Z).$$
By \cite{Huy}, Lemma 2.3, we can assume that $\tilde{f}$ is orientation-preserving. By \cite{HuySt}, Theorem 0.1, we conclude that $(X_{\varphi},\alpha_{\varphi})$ belongs to $\text{FM}^r(X,\alpha)$. By construction, we have $\mu([(X_{\varphi},\alpha_{\varphi})])=[x]$. 

Finally, we observe that if $x$ and $x'$ in $J^r(d(T(X,\alpha)))$ are in the same orbit for the action of $\text{O}_{\text{Hdg}}(T(X,\alpha))$, then the twisted K3 surfaces $(X_{\varphi},\alpha_{\varphi})$ and $(X_{\varphi}^{\prime},\alpha_{\varphi}^{\prime})$, such that $\mu([(X_{\varphi},\alpha_{\varphi})])=[x]$ and $\mu([(X_{\varphi}^{\prime},\alpha_{\varphi}^{\prime})])=[x']$, are $\sim$-equivalent. Indeed, by hypothesis, there exists a Hodge isometry $\eta$ of $T(X,\alpha)$ which induces an isomorphism $\bar{\eta}$ on $d(T(X,\alpha))$ such that $\bar{\eta}(x)=x'$. Then, by \cite{Ni}, Proposition 1.4.2, the overlattices $\langle x,T(X,\alpha) \rangle \cong T_{X_{\varphi}}$ and $\langle x',T(X,\alpha) \rangle \cong T_{X_{\varphi}^{\prime}}$ are isomorphic. Moreover, this isomorphism sends $\alpha_{\varphi}$ to $\alpha_{\varphi}^{\prime}$, because it is an extension of $\eta$; this observation completes the proof of the proposition.
\end{proof}


\subsection{Application of Proposition \ref{thmcasogen}}

Let $Y$ be a very general special cubic fourfold of discriminant $d$ satisfying conditions $\textbf{(a')}$ and $\textbf{(b)}$. By Proposition \ref{thmcasogen}, the number of isomorphism classes of Fourier-Mukai partners of order $\kappa$ of $(X,\alpha)$ is
$$m'= \sum_x  \left\lbrace \sum_M \tau(x,M) + \varepsilon(r) \sum_{M'} \tau(x,M') \right\rbrace.$$
Let us write $m'$ in a more explicit way, in order to find numerical conditions on $d$ and $\kappa$, which guarantee the existence of non isomorphic Fourier-Mukai partners for $Y$. We consider only the case $\kappa \geq 2$, because we have already treated the untwisted case in Section 3. Let $c$ be the degree of the polarization class on $X$. Notice that $d=\kappa^2c$ (see \cite{Huy}, Lemma 2.13). 

\begin{lemma}
Let $g$ be a generator of the cyclic group $d(T(X,\alpha))$ of order $d$. Then
$$I^{\kappa}(d(T(X,\alpha)))= \lbrace (a \kappa c)g: a \in (\Z/\kappa \mathbb{Z})^{\times} \rbrace.$$
\end{lemma}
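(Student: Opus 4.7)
The plan is to split the argument into two parts: first pin down which elements of the cyclic group $d(T(X,\alpha))$ have order exactly $\kappa$, and then verify that every such element is automatically isotropic under $q_{T(X,\alpha)}$. Since $|d(T(X,\alpha))| = d = \kappa^2 c$, an element $bg$ in $\Z/d\Z$ has order $d/\gcd(b,d)$, and requiring this to equal $\kappa$ forces $\gcd(b,\kappa^2 c) = \kappa c$, hence $b \equiv a\kappa c \pmod{d}$ for some integer $a$ coprime to $\kappa$ (and well-defined modulo $\kappa$). This already gives the set-theoretic equality
$$\{x \in d(T(X,\alpha)) : \mathrm{ord}(x) = \kappa\} = \{(a\kappa c)g : a \in (\Z/\kappa\Z)^\times\},$$
which is the candidate set appearing in the statement.

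For the isotropy, the key observation is that the very existence of the twisted K3 surface $(X,\alpha)$ provides at least one isotropic element of order $\kappa$. Indeed, $\alpha$ is by definition a surjection $\alpha : T_X \twoheadrightarrow \Z/\kappa\Z$ with kernel $T(X,\alpha)$, so $T_X$ is an integral overlattice of $T(X,\alpha)$ with quotient $\Z/\kappa\Z$. By the standard correspondence between integral overlattices and isotropic subgroups of the discriminant group (\cite{Ni}, Proposition 1.4.1), the image of $T_X/T(X,\alpha)$ inside $d(T(X,\alpha))$ is an isotropic cyclic subgroup of order $\kappa$. By the first paragraph, any generator $x_0$ of this subgroup must have the form $x_0 = (a_0\kappa c)g$ for some $a_0 \in (\Z/\kappa\Z)^\times$.

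To conclude, given any $a \in (\Z/\kappa\Z)^\times$ I will lift $a\, a_0^{-1}$ to an integer $n$; then in $d(T(X,\alpha))$ one has $(a\kappa c)g = n \cdot x_0$, and since the discriminant quadratic form satisfies $q_{T(X,\alpha)}(n x_0) = n^2\, q_{T(X,\alpha)}(x_0) = 0$ in $\Q/2\Z$, the element $(a\kappa c)g$ is isotropic. This propagation of isotropy from a single generator to all elements of the same order is precisely what uses the cyclicity of $d(T(X,\alpha))$, which in turn is guaranteed by hypothesis \textbf{(c)} together with Proposition \ref{K_dperp}. The only non-formal input is the existence of one isotropic element of order $\kappa$; the mild obstacle is recognizing that this is furnished for free by the twisted K3 surface structure, so that no direct computation of $q_{T(X,\alpha)}(g)$ (which would otherwise require a careful analysis of the two cases $d\equiv 0,2\pmod 6$ in Proposition \ref{K_dperp}) is needed.
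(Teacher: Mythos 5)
Your proof is correct, and the second half takes a genuinely different route from the paper. The order computation in your first paragraph is essentially what the paper does (it asserts the order claim as ``easy to verify'' and notes that the order-$\kappa$ elements are exactly the generators of the unique order-$\kappa$ subgroup of the cyclic group); your version is just more explicit. The real divergence is in the isotropy step: the paper proves that every $(a\kappa c)g$ is isotropic by a direct computation, choosing the generator $g$ so that $q_{T(X,\alpha)}(g)$ has the explicit value $\frac{2d-1}{3d}$ or $\frac{2d-9}{3d}$ from Proposition \ref{K_dperp} and then checking case by case for $d\equiv 2$ and $d\equiv 0 \pmod 6$ that $a^2\kappa^2c^2\,q(g)$ lands in $2\Z$. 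You instead extract a single isotropic element of order $\kappa$ for free from the overlattice $T(X,\alpha)=\ker\alpha\subset T_X$ via Nikulin's correspondence, and propagate isotropy to all other order-$\kappa$ elements using $q(nx)=n^2q(x)$ together with the uniqueness of the order-$\kappa$ subgroup in a cyclic group. Your argument is more conceptual, avoids the case split on $d \bmod 6$, and only needs Proposition \ref{K_dperp} for the cyclicity of $d(T(X,\alpha))$ rather than for the exact value of $q(g)$; the paper's computation is more self-contained and does not invoke the geometric input of the twisted K3 surface at this point. One small point of precision: Nikulin's Proposition 1.4.1 identifies \emph{even} overlattices with subgroups isotropic for the $\Q/2\Z$-valued quadratic form (a merely integral overlattice only gives isotropy for the $\Q/\Z$-valued bilinear form), so you should say ``even overlattice'' rather than ``integral overlattice''; since $T_X\subset H^2(X,\Z)$ is even, the conclusion stands.
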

\begin{proof}
We observe that every element of the form $x= (a\kappa c)g$ with $a \in \left( \Z/ \kappa\mathbb{Z} \right)^{\times}$ belongs to $I^{\kappa}(d(T(X,\alpha)))$. Indeed, let $g$ be a generator of $d(T(X,\alpha))$ as in Proposition \ref{K_dperp}. An easy computation shows that $q_{T(X,\alpha)}((a \kappa c)g)  \in 2\mathbb{Z}$ and that $(a \kappa c)g$ has order $\kappa$. On the other hand, the elements of $I^{\kappa}(d(T(X,\alpha)))$ are all the possible generators of the unique subgroup of order $\kappa$ of $d(T(X,\alpha)) \cong \Z/d \mathbb{Z}$. 
\end{proof}

For every $x=(a \kappa c)g$ in $I^{\kappa}(d(T(X,\alpha)))$, we set 
$$\tilde{M}_x:=\langle \lambda(x),N(X,\alpha) \rangle \quad \text{and} \quad H_x:=\frac{\tilde{M}_x}{N(X,\alpha)}.$$
We point out that 
$$J^{\kappa}(d(T(X,\alpha)))=\lbrace x \in I^{\kappa}(d(T(X,\alpha))): \tilde{M}_x \cong U \oplus \Z l \text{ with } l^2=c \rbrace.$$
Indeed, given $x \in J^{\kappa}(d(T(X,\alpha)))$, let $(X_x,\alpha_x)$ be the twisted K3 surface such that $\mu([(X_x,\alpha_x)])=[x]$ (which exists because $\mu$ is surjective as showed in the proof of Proposition \ref{thmcasogen}). Then, by definition, we have 
$$N(X_x) \cong \langle \lambda(x),N(X,\alpha) \rangle \quad \text{and} \quad T_{X_x} \cong \langle x, T(X,\alpha) \rangle.$$
Since $T(X_x,\alpha_x) \cong T(X,\alpha)$, we have 
$$d=|d(T(X_x,\alpha_x))|= \text{ord}(\alpha_x)^2 |d(T_{X_x})|= \kappa^2 |d(T_{X_x})|,$$
which implies that
$$d(\tilde{M}_x) \cong d(T_{X_x}) \cong \Z/c \mathbb{Z}.$$
On the other hand, the opposite inclusion follows from the definition of $J^{\kappa}(d(T(X,\alpha)))$.

\begin{lemma}
Every element $x$ in $I^{\kappa}(d(T(X,\alpha)))$ belongs to $J^{\kappa}(d(T(X,\alpha)))$.
\end{lemma}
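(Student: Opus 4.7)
The plan is to show that $\tilde{M}_x$ has the same rank, signature, and discriminant form as $U\oplus\mathbb{Z}l$ with $l^2=c$, and then to invoke Nikulin's uniqueness theorem for indefinite even lattices to conclude that the two lattices are isomorphic; this will place $x$ in $J^\kappa(d(T(X,\alpha)))$ by the characterisation of $J^\kappa$ recalled just before the statement.

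First, I would verify the basic invariants of $\tilde{M}_x$. By the primitive embedding $T(X,\alpha)\oplus N(X,\alpha)\hookrightarrow\tilde{H}(X,\alpha,\mathbb{Z})\cong\tilde{\Lambda}$ into the even unimodular Mukai lattice, the gluing map $\lambda\colon d(T(X,\alpha))\xrightarrow{\sim}d(N(X,\alpha))$ satisfies $q_{N(X,\alpha)}\circ\lambda=-q_{T(X,\alpha)}$ by \cite{Ni}, Proposition 1.6.1. Since $x$ is isotropic of order $\kappa$, $\lambda(x)$ is also isotropic of order $\kappa$, so $\tilde{M}_x$ is a well-defined even overlattice of $N(X,\alpha)$, of the same rank $3$ and signature $(2,1)$. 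Its discriminant group $d(\tilde{M}_x)\cong\mathbb{Z}/c\mathbb{Z}$ was already identified just before the statement.

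The main step is to pin down the discriminant form $q_{\tilde{M}_x}$ on this cyclic group. Via $\lambda$ one has a canonical identification $q_{\tilde{M}_x}\cong -q_{T_x}$, so it suffices to compute $q_{T_x}$. Writing $x=a\kappa c\,g$ as in the preceding lemma, with $g$ the distinguished generator of $d(T(X,\alpha))\cong\mathbb{Z}/d\mathbb{Z}$ supplied by Proposition \ref{K_dperp}, I would check that $\langle x\rangle^\perp=\langle\kappa g\rangle$, so that $\overline{\kappa g}$ generates $d(T_x)$. The formula $q_{T_x}(\overline{\kappa g})=\kappa^2\, q_{T(X,\alpha)}(g)$ together with the explicit value of $q_{T(X,\alpha)}(g)$ from Proposition \ref{K_dperp} (case by case in $d\bmod 6$, and using the hypothesis $9\nmid d$ for cyclicity when $d\equiv 0\pmod 6$) then yields $q_{T_x}$, and hence $q_{\tilde{M}_x}$, explicitly. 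A direct comparison shows that this form agrees with the standard discriminant form $q(l/c)=1/c\pmod{2\mathbb{Z}}$ of $U\oplus\mathbb{Z}l$ with $l^2=c$, which inherits its discriminant form from $\mathbb{Z}l$ because $U$ is unimodular.

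Once the discriminant forms are matched, the rank bound $\text{rk}(\tilde{M}_x)=3\geq l(d(\tilde{M}_x))+2=3$ is met, so Nikulin's uniqueness theorem for indefinite even lattices (\cite{Ni}, e.g.\ Corollary 1.13.3 or Theorem 1.14.2) forces the two even lattices of signature $(2,1)$ with isomorphic discriminant forms to be themselves isomorphic. This gives $\tilde{M}_x\cong U\oplus\mathbb{Z}l$ with $l^2=c$, i.e.\ $x\in J^\kappa(d(T(X,\alpha)))$. The hard part will be the bookkeeping of signs and generators in the discriminant form calculation, especially in the case $d\equiv 0\pmod 6$ where Proposition \ref{K_dperp} presents $d(K_d^\perp)$ as a direct sum and a cyclic generator must be chosen before applying the formula.
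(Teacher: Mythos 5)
Your proof is correct in outline but takes a genuinely different, and considerably more computational, route than the paper's. The paper never touches discriminant forms or genus theory: it observes that the element $\bar{x}=\mu([(X,\alpha)])$ itself lies in $J^{\kappa}(d(T(X,\alpha)))$ with overlattice $\tilde{M}_{\bar{x}}=N(X)\cong U\oplus\Z l$ essentially by definition, and that, because $d(N(X,\alpha))$ is cyclic (this is where the hypothesis $9\nmid d$ enters), the order-$\kappa$ subgroups $H_x=\langle\lambda(x)\rangle$ and $H_{\bar{x}}=\langle\lambda(\bar{x})\rangle$ coincide; Nikulin's bijection between even overlattices and isotropic subgroups of the discriminant group (\cite{Ni}, Theorem 1.4.1) then forces $\tilde{M}_x=\tilde{M}_{\bar{x}}$ as sublattices of $N(X,\alpha)^{\vee}$, so no uniqueness-in-genus argument is needed at all. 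Your route --- matching rank, signature $(2,1)$ and discriminant form, then invoking Nikulin's uniqueness theorem via the (tight) bound $3\geq l(\Z/c\mathbb{Z})+2$ --- does work in principle, but it defers the entire content of the lemma to the ``direct comparison'' of $-q_{T_x}$ with $\langle 1/c\rangle$, which you yourself flag as the hard part: that comparison is a nontrivial statement about squares of units and is sensitive to the sign conventions relating $H^4(Y,\Z)$, Proposition \ref{K_dperp} and the Mukai pairing, so as written it is an assertion rather than a proof. The cleanest way to discharge it is to note that $q_{T_x}=q_{T_{\bar{x}}}$ because $\langle x\rangle=\langle\bar{x}\rangle$ inside the cyclic group $d(T(X,\alpha))$ --- at which point you have rediscovered the paper's argument and the appeal to uniqueness in the genus becomes superfluous. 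What your approach buys is independence from knowing a distinguished element of $J^{\kappa}$ in advance; what the paper's buys is the complete avoidance of any arithmetic of discriminant forms.
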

\begin{proof}
Let $\bar{x}=(\bar{a}\kappa c)g$ be the image via $\mu$ of the isomorphism class of the K3 surface $(X,\alpha)$, with $\bar{a}$ in $(\Z/\kappa \mathbb{Z})^{\times}$. By definition, we have 
$$U \oplus \Z l \cong N(X) \cong \langle \lambda(\bar{x}), N(X,\alpha) \rangle,$$
with $l^2=c$; in particular, the lattice $U \oplus \Z l$ is an overlattice of $N(X,\alpha)$. Let $x=(a \kappa c)g$ be an element in $I^{\kappa}(d(T(X,\alpha)))$. Since the groups $H_x$ and $H_{\bar{x}}$ are cyclic subgroups of $d(N(X,\alpha))$ of the same order, they are the same subgroup. By \cite{Ni}, Theorem 1.4.1, we conclude that the overlattices $U \oplus \Z l$ and $\tilde{M}_x$ are isomorphic. In particular, the element $x$ is in $J^{\kappa}(d(T(X,\alpha)))$. 
\end{proof}

\begin{prop}
\label{countm'}
We have 
$$
m' :=\# \emph{FM}^{\kappa}(X,\alpha)=
\begin{cases}
\varphi(\kappa) 2^{h-2} \quad & \text{if } \kappa>2 \text{ and } c=2 \\
\varphi(\kappa) 2^{h-1} \quad & \text{if } \kappa= 2 \text{ or } c>2,
\end{cases}
$$
where $h$ is the number of distinct prime factors in the prime factorization of $c/2$ if $c>2$, and $h=1$ if $c=2$.
\end{prop}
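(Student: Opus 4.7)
The plan is to apply the equality of Proposition~\ref{thmcasogen}, reducing $m'$ to the right-hand side of~\eqref{CF}, and to evaluate each ingredient using the three preceding lemmas together with a short genus-theoretic computation for rank one lattices. The three lemmas already describe the outer sum: the index set $\mathrm{O}_{\mathrm{Hdg}}(T(X,\alpha)) \setminus J^\kappa(d(T(X,\alpha)))$ is the quotient of $I^\kappa(d(T(X,\alpha))) = \{(a\kappa c)g : a \in (\mathbb{Z}/\kappa\mathbb{Z})^{\times}\}$ by the involution $a \mapsto -a$. An elementary check shows this involution is fixed-point free for $\kappa \geq 3$ and is the identity for $\kappa = 2$, so the outer sum has $1$ term if $\kappa = 2$ and $\varphi(\kappa)/2$ terms if $\kappa \geq 3$.

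Next, for each orbit representative $x$ I would identify the lattice $M_\varphi$ appearing in the inner sum. By the description of the corresponding FM partner $(X_x,\alpha_x)$ from Section~3.1, one has $\langle\lambda(x),N(X,\alpha)\rangle \cong N(X_x) \cong U \oplus \mathbb{Z}l$ with $l^2 = c$, so $M_\varphi = \mathbb{Z}l$; since the genus of a positive definite rank one lattice is a single lattice, exactly one of the two inner sums contributes. A short computation of $\mathrm{O}(\mathbb{Z}l) = \{\pm 1\}$, of its image in $\mathrm{O}(d(M_\varphi))$, and of the orientation it induces on $M_\varphi \otimes \mathbb{R}$ yields $M_\varphi \in \mathcal{G}_1(M_\varphi)$ when $c = 2$ (so the corresponding term is \emph{not} weighted by $\varepsilon(\kappa)$) and $M_\varphi \in \mathcal{G}_2(M_\varphi)$ when $c > 2$ (so it \emph{is} weighted by $\varepsilon(\kappa)$).

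The main computational step, and the one I expect to require the most care, is the evaluation of $\tau(x, M_\varphi) = \#(\mathrm{O}_{\mathrm{Hdg}}(T_x,\alpha_x) \setminus \mathrm{O}(d(M_\varphi))/\mathrm{O}(M_\varphi))$. The group $\mathrm{O}(d(M_\varphi))$ consists of residues $\bar{n} \in \mathbb{Z}/c\mathbb{Z}$ with $n^2 \equiv 1 \pmod{2c}$; a Chinese Remainder Theorem count on the prime factorization of $c$ yields $|\mathrm{O}(d(M_\varphi))| = 2^h$ when $c > 2$ and $=1$ when $c = 2$, and dividing by the image of $\mathrm{O}(M_\varphi) = \{\pm 1\}$ (of order $2$ for $c > 2$ and order $1$ for $c = 2$) leaves $2^{h-1}$ classes when $c > 2$ and a single class when $c = 2$. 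The subtle point is verifying that the left action of $\mathrm{O}_{\mathrm{Hdg}}(T_x,\alpha_x)$ on this double coset is trivial; for this I would mimic the proof of the previous lemma, observing that any $g \in \mathrm{O}_{\mathrm{Hdg}}(T_x,\alpha_x)$ restricts to a Hodge isometry of $T(X,\alpha) = \ker(\alpha_x)$ which by genericity equals $\pm \mathrm{id}$, and that the condition $g^*\alpha_x = \alpha_x$ combined with Nikulin's identification of discriminant groups forces the induced automorphism of $d(M_\varphi)$ to lie in the image of $\mathrm{O}(M_\varphi)$.

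Combining the orbit count with $\tau(x,M_\varphi)$, weighted by $\varepsilon(\kappa) \in \{1,2\}$ when $c > 2$ and unweighted when $c = 2$, yields $\varphi(\kappa)\,2^{h-1}$ in every case except $(\kappa \geq 3,\,c = 2)$, where the product is $\varphi(\kappa)\,2^{h-2}$, matching the statement.
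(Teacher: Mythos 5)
Your proposal is correct and follows essentially the same route as the paper: equality in Ma's formula via Proposition \ref{thmcasogen}, the three lemmas giving $1$ or $\varphi(\kappa)/2$ terms in the outer sum, the identification $M_{\varphi}=\Z l$ with $\Z l \in \mathcal{G}_1$ for $c=2$ and $\Z l \in \mathcal{G}_2$ for $c>2$, and the computation $\#\text{O}(d(\Z l))=2^h$ leading to $\tau(x,\Z l)=2^{h-1}$. The only cosmetic difference is that the paper records $\text{O}_{\text{Hdg}}(T_x,\alpha_x)=\lbrace \pm \text{id}\rbrace$ or $\lbrace \text{id}\rbrace$ according to $\kappa=2$ or $\kappa>2$ and absorbs its effect into the double coset directly, whereas you phrase the same fact as triviality of the left action modulo the image of $\text{O}(M_{\varphi})$; the resulting arithmetic is identical.
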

\begin{proof}
The previous lemmas and the fact that $\text{O}_{\text{Hdg}}(T(X,\alpha))= \lbrace \pm \text{id} \rbrace$ imply that
$$\#(\text{O}_{\text{Hdg}}(T(X,\alpha))) \setminus J^{\kappa}(d(T(X,\alpha)))=
\begin{cases}
1 \quad & \text{if } \kappa=2, \\
\frac{1}{2} \varphi(\kappa) \quad & \text{if } \kappa>2
\end{cases}
$$
where $\varphi$ denotes the Euler function. On the other hand, the only lattice $M_{\varphi}$ such that $\tilde{M}_x \cong U \oplus M_{\varphi}$ is $\Z l$ with $l^2=c$. Thus, our computation is actually the same used in \cite{Ma}, to prove Proposition 5.1. Indeed, we have 
$$\mathcal{G}(\Z l)= \lbrace \Z l \rbrace =
\begin{cases}
\mathcal{G}_1(\Z l) \quad \text{if } c=2, \\
\mathcal{G}_2(\Z l) \quad \text{if } c> 2.
\end{cases}
$$
Moreover, we notice that 
$$\text{O}(\Z l)=\lbrace \pm \text{id} \rbrace \quad \text{and} \quad \text{O}(d(\Z l))=
\begin{cases}
\lbrace \text{id} \rbrace \quad & \text{if } c= 2,\\
\left( \frac{\Z}{2 \mathbb{Z}} \right) ^h \quad & \text{if } c>2.
\end{cases}
$$
In particular, the order of the set $\text{O}(d(\Z l))$ is $2^h$ if $c>2$.
Finally, we observe that
$$\text{O}_{\text{Hdg}}(T_x,\alpha_x)=
\begin{cases}
\lbrace \pm \text{id} \rbrace \quad & \text{if } \kappa= 2, \\
\lbrace \text{id} \rbrace \quad & \text{if } \kappa>2.
\end{cases}
$$
So, if $\kappa>2$, then
$$
m'=
\begin{cases}
\frac{1}{2}\varphi(\kappa) \quad & \text{if } c= 2, \\
\frac{1}{2}\varphi(\kappa)2^{h}=\varphi(\kappa) 2^{h-1}  \quad & \text{if } c>2.
\end{cases}
$$
Otherwise, if $\kappa=2$, then
$$
m'=
\begin{cases}
1 \quad & \text{if } c = 2, \\
2^{h-1} \quad & \text{if } c>2,
\end{cases}
$$
as we claimed.
\end{proof}

By Proposition \ref{thmcasogen} and Proposition \ref{countm'}, the lower bound given by Theorem \ref{propmiatwist} is explicitely determined. In particular, it is easy to construct examples of very general twisted K3 surfaces and, consequently, of very general cubic fourfolds with an arbitrary large number of non-isomorphic Fourier-Mukai partners. 

\begin{ex}
Let us take $d=50$, which satisfies condition \eqref{condd} and $\textbf{(b)}$. A cubic fourfold in $\mathcal{C}_{50}$ has a twisted associated K3 surface with Brauer class of order $\kappa=5$. By Theorem \ref{propmiatwist} and Proposition \ref{countm'}, the very general element in $\mathcal{C}_{50}$ admits at least $\varphi(5)/2=4/2=2$ (isomorphim classes of) Fourier-Mukai partners. 
\end{ex}

Dipartimento di Matematica ``F.\ Enriques'', Universit\`a degli Studi di Milano, Via Cesare Saldini 50, 20133 Milano, Italy \\
\indent E-mail address: \texttt{laura.pertusi@unimi.it}\\
\indent URL: \texttt{http://www.mat.unimi.it/users/pertusi}

\end{document}